\titleformat{\paragraph}[runin]
{\normalfont\normalsize\bfseries}{\theparagraph}{1em}{}
\titleformat{\subparagraph}[runin]
{\normalfont\normalsize\bfseries}{\thesubparagraph}{1em}{}
\titlespacing*{\chapter} {0pt}{50pt}{40pt}
\titlespacing*{\section} {0pt}{3.5ex plus 1ex minus .2ex}{2.3ex plus .2ex}
\titlespacing*{\subsection} {0pt}{3.25ex plus 1ex minus .2ex}{1.5ex plus .2ex}
\titlespacing*{\subsubsection}{0pt}{3.25ex plus 1ex minus .2ex}{1.5ex plus .2ex}
\titlespacing*{\paragraph} {0pt}{3.25ex plus 1ex minus .2ex}{1em}
\titlespacing*{\subparagraph} {\parindent}{3.25ex plus 1ex minus .2ex}{1em}
\newtheorem{theorem}{Theorem}[section]
\newtheorem{lemma}[theorem]{Lemma}
\newtheorem{proposition}[theorem]{Proposition}
\newtheorem{corollary}[theorem]{Corollary}
\theoremstyle{definition}
\newtheorem{definition}[theorem]{Definition}
\theoremstyle{remark}
\newtheorem{remark}[theorem]{Remark}
\DeclareMathOperator{\ide}{id}
\DeclareMathOperator{\End}{End}
\DeclareMathOperator{\op}{op}
\newcommand{\ot}{\otimes}
\DeclareMathAlphabet{\mathpzc}{OT1}{pzc}{m}{it}
\author{Jack Arce}
\address{Pontificia Universidad Cat\'olica del Per\'u, Secci\'on Matem\'aticas, PUCP,
Av. Universitaria 1801, San Miguel, Lima 32, Per\'u.}
\email{jarcef@pucp.edu.pe}
\thanks{ Jack Arce research was supported by PUCP-DGI-2012-0011, PUCP-DGI-2013-0036 and PUCP- Huiracocha scholarship}
\keywords{Twisted tensor products, Representation, Twisting maps}
\subjclass[2010]{16S10, 16S80} 
\begin{document}

\title{Representations of twisted tensor products}

\maketitle
\begin{abstract}
We obtain a faithful representation of the twisted tensor product $B\otimes_{\chi} A$ of unital associative algebras,
when $B$ is finite dimensional. This  generalizes the representations of \cite{Ci} where $B=K[X]/<X^2-X>$, \cite{GGV} where $B=K[X]/<X^n>$   and \cite{JLNS} where $B=K^n$. 
Furthermore, we establish conditions to extend twisted tensor products $B\ot_{\chi} A$ and $C\ot_{\psi} A$ to a twisted tensor product $(B\times C) \ot_{\varphi} A$.
\end{abstract}

\section*{Introduction}

Let $k$ be a commutative ring and let $A$, $B$ be unitary $k$-algebras. A {\em twisted tensor product of $A$ with $B$} (over $k$) is an algebra structure defined on $A\ot_k B$, such that the canonical maps $i_A\colon A\longrightarrow  A\ot_k B$ and $i_B\colon B\longrightarrow A\ot_k B$ are algebra maps satisfying $a\ot b= i_A(a)i_B(b)$. This structure has been studied by many people with different motivations (see for instance \cite{Ca}, \cite{CSV}, \cite{GG}, \cite{GGVTP}, \cite{Ma}, \cite{Tam}, \cite{VDVK}). On one hand it is the most general solution to the problem of factorization of structures in the setting of associative algebras. Consequently, a number of examples of classical and recently defined constructions in ring theory fits into this construction. For instance, Ore extensions, skew group algebras, smash products, etcetera. On the other hand it has been proposed as the natural representative for the cartesian product of noncommutative spaces, this being based on the existing duality between the categories of algebraic affine spaces and commutative algebras, under which the cartesian product of spaces corresponds to the tensor product of algebras. Besides that, twisted tensor products arise as a useful tool for building algebras starting with simpler ones.

Given algebras $A$ and $B$, a basic problem is to determine families of twisted tensor products of $A$ with $B$ (ideally all of them) and classify them up to a natural equivalence relation. In general this is a very difficult problem. For instance, in \cite{GGV}*{Corollary~4.2} it was proven that determining all the twisted tensor products of $A$ with $k[X]/\langle X^5 \rangle$ with the form
$$
(1\ot X)(a\ot 1)= a\ot X + \gamma_1^2(a)\ot X^2 + \gamma_1^3(a)\ot X^3 + \gamma_1^4(a)\ot X^4,
$$
where  $\gamma_1^j\colon A\to A$ ($j=2,3,4$) are $k$-linear maps, is equivalent to find all the pairs $(\delta_1,\delta_2)$ of derivations of $A$, such that $[\delta_1]\cup [\delta_2]=0$, where $[\delta_i]$ is the class of $\delta_i$ n the cohomology group $H^1(A)$ and $\cup$ denotes the cup product. In spite of these difficulties, this problem was considered for different types of algebras in \cite{Ci}, \cite{GGV}, \cite{JLNS} and \cite{LNC}. When $k$ is a field and the algebras involved are finite dimensional over $k$, as in the these papers, given twisted tensor product $C$ of $A$ with $B$ and fixed a basis $\{b_1,\dots, b_n\}$, there exists maps $\gamma_i^j\colon A\to A$, satisfying suitable condition, such that
$$
\chi(1\ot a)(b_i\ot 1)= \sum_{j=1}^n  b_j\ot \gamma_i^j(a), \quad\text{for all $a\in A$ an all $i$.}
$$
In Corollary~\ref{Prop condiciones 3 y 4} and Proposition~\ref{condiciones 1 y 2} we show that these conditions are satisfied if and only if the map $\hat{\rho}_{\chi}$ and $\hat{\phi}_{\chi}$, introduced in the statements of these results, are matricial representations of algebras. The first step in the study of the twisted tensor products considered in \cite{Ci}, \cite{GGV}, \cite{JLNS} and~\cite{LNC} was determine the conditions required to the maps $\gamma^j_i$ by means of direct computations. Using the fact that $\hat{\rho}_{\chi}$  and $\hat{\phi}_{\chi}$ must be representations, these conditions arise naturally in each of the examples, as is shown in the last section.

\section{Preliminaries}
Let $K$ be a commutative ring with~$1$. All the maps considered in this notes are $K$-linear maps, the symbol denotes the tensor product over $K$, by an algebra we understand an associative unital $K$-algebra and the algebra morphisms are unital.

\subsection{Twisting maps} Let $A$ and $B$ be algebras, and let $\mu_A$ and $\mu_B$ the multiplication maps of $A$ and $B$, respectively. A {\em twisted tensor product} of  $B$ with $A$ is an algebra structure over the $K$-module $B\ot A$, such that the canonical maps
$$
i_B\colon B\longrightarrow B\ot A\quad\text{and}\quad i_A\colon A\longrightarrow B\ot A
$$
are morphisms of algebras, and $i_B(b)i_A(a)=b\ot a$ for all $b\in B$ and $a\in A$.

\smallskip

Given a twisted tensor product of $B$ with $A$, the map
$$
\chi \colon A \ot B\longrightarrow B\ot A,
$$
defined by $\chi: = \mu \circ (i_A\ot i_B)$, satisfies:

\begin{enumerate}

\smallskip

\item $\chi(1\ot b) = b\ot 1$ and $\chi(a\ot 1) = 1\ot a$, for all $a\in A$ and $b\in B$,

\smallskip

\item $\chi\circ (\mu_A\ot B) = (B\ot \mu_A)\circ (\chi\ot A)\circ (A\ot \chi)$,

\smallskip

\item $\chi \circ (A\ot \mu_B) = (\mu_B\ot A)\circ (B\ot \chi)\circ (\chi\ot B)$.

\smallskip

\end{enumerate}
A map fulfilling these conditions is called  a {\em twisting map}. Conversely, if
$$
\chi\colon A\ot B\longrightarrow B\ot A
$$
is a twisting map, then $A\ot B$ becomes a twisted tensor product via
$$
\mu_{\chi} := (\mu_B\ot\mu_A)\circ (B\ot \chi \ot A).
$$
This algebra will be denote $B\ot_{\chi} A$. It is evident that these constructions are inverse one of each other.

\begin{remark}\label{accion a der de A} The right action of $A$ on $B\ot_{\chi} A$ induced by the canonical map of $A$ into $B\ot_{\chi} A$, is the canonical right action of $A$ on $B\ot_{\chi} A$. Similarly, the left action of $B$ on $B\ot_{\chi} A$ induced by the canonical map of $B$ in $B\ot_{\chi} A$ is the canonical left action of $B$ on $B\ot_{\chi} A$.
\end{remark}

\begin{remark}\label{prop univ del Twisted tensor product} It is easy to check that the twisted tensor product $B\ot_{\chi} A$ has the following universal property: given morphisms of algebras $\varphi\colon A \to C$ and $\psi\colon B\to C$, there exists a unique morphisms of algebra $\Psi\colon B\ot_{\chi} A \longrightarrow C$ such that $\Psi\circ i_A=\varphi$ and $\Psi\circ i_B=\psi$ if and only if
$$
\mu_C\circ (\varphi\ot \psi)= \mu_C \circ (\psi\ot \varphi) \circ \chi.
$$
Consequently, if $C$ is a twisted tensor product $E\ot_{\varpi} D$, then given morphisms of algebras $f\colon A\to D$ and $g\colon B\to E$, the map
$$
g\ot f\colon B\ot_{\chi} A \longrightarrow E\ot_{\varpi} D
$$
is a morphism of algebras if and only if $\varpi \circ (f\ot g)= (g\ot f)\circ \chi$.
\end{remark}

\subsection{Twisting maps of finite dimensional algebras}\label{aptoalgdimfin} From now on we assume that $K$ is a field and that $B$ is a finite dimensional algebra over $K$. Moreover we fix a basis $\mathcal{B}:=\{b_1,\dots,b_{n}\}$ of $B$. Recall that the structure constants $\lambda^k_{ij}$ ($1\le i,j,k\le n$) of $B$ with respect to $\mathcal{B}$ are the scalars determined by the equalities
$$
b_ib_j=\sum_{k=1}^n \lambda_{ij}^k b_k.
$$
Since the multiplication $B$ is associative, the $\lambda_{ij}^k$'s satisfy
$$
\sum_{l=1}^n\lambda_{ij}^l \lambda_{lk}^m= \sum_{l=1}^n\lambda_{jk}^l \lambda_{il}^m.
$$
Moreover, since $B$ is unital, if $1=\sum_j\alpha_j b_j$, then
$$
\sum_{j=1}^n\alpha_j\lambda_{ji}^k=\sum_{j=1}^n\alpha_j\lambda_{ij}^k=\delta_{ki}.
$$

A  map $\chi\colon A \otimes B \longrightarrow B\ot A$ determine unique maps $\gamma_i^j\colon A\longrightarrow A$ ($0\le i,j<n$),
such that
\begin{equation}\label{eq def de los gamma ij}
\chi(a\ot b_i)= \sum_{j=1}^n  b_j\ot \gamma_i^j(a), \quad\text{for all $i$.}
\end{equation}

\begin{proposition}\label{condiciones para ser torcimiento} The map $\chi$ is a twisting map if and only if the functions $\gamma^i_j$ satisfy the following properties:

\begin{enumerate}

\smallskip

\item $\gamma_i^j(1)=\delta_{ij} 1$,

\smallskip

\item $\gamma_i^k(aa')=\sum_j\gamma_j^k(a)\gamma_i^j(a')$,

\smallskip

\item $\alpha_k\ide=\sum_i\alpha_i\gamma_i^k$,

\smallskip

\item $\sum_k \lambda_{ij}^k\,\gamma_k^m=\sum_k\sum_l \lambda_{kl}^m\, \gamma^l_j\circ \gamma^k_i$.

\end{enumerate}
More precisely, conditions (1) and (2) are satisfied if and only if $\chi$ is compatible with the algebra structure of $A$, and conditions (3) and (4) are satisfied if and only if it is compatible with that of~$B$.
\end{proposition}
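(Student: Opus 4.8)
The plan is to show that each of the three axioms defining a twisting map, once evaluated on generators and expanded by means of~\eqref{eq def de los gamma ij}, is equivalent to one (or a pair) of the stated identities among the $\gamma_i^j$. The mechanism making every step reversible is the linear independence of $\{b_1,\dots,b_n\}$: an equality $\sum_k b_k\ot u_k=\sum_k b_k\ot v_k$ in $B\ot A$ holds if and only if $u_k=v_k$ in $A$ for each $k$, which is precisely what converts one equation in $B\ot A$ into $n$ equations in $A$. Because $\{b_i\}$ also spans $B$, verifying an axiom on all elements $a\ot b_i$ (resp.\ $a\ot a'\ot b_i$ and $a\ot b_i\ot b_j$) is equivalent, by bilinearity, to verifying it everywhere, so I may restrict to basis elements. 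Both implications of the proposition then follow at once, since every reduction is an equivalence.

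I would first dispatch the two halves of axiom~(1). Evaluating $\chi(1\ot b_i)=b_i\ot 1$ with~\eqref{eq def de los gamma ij} gives $\sum_j b_j\ot\gamma_i^j(1)=b_i\ot 1$, and comparing coefficients yields $\gamma_i^j(1)=\delta_{ij}1$, that is, identity~(1); this is exactly the compatibility of $\chi$ with the unit of $A$. For the other half, writing $1=\sum_i\alpha_i b_i$ and expanding $\chi(a\ot 1)=\sum_i\alpha_i\chi(a\ot b_i)=\sum_j b_j\ot\bigl(\sum_i\alpha_i\gamma_i^j(a)\bigr)$, the requirement $\chi(a\ot 1)=1\ot a=\sum_j b_j\ot\alpha_j a$ becomes $\sum_i\alpha_i\gamma_i^j(a)=\alpha_j a$ for all $a$, which is identity~(3); this encodes compatibility with the unit of $B$.

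Next I would expand axiom~(2), the compatibility with $\mu_A$, on $a\ot a'\ot b_i$. The left side is $\chi(aa'\ot b_i)=\sum_k b_k\ot\gamma_i^k(aa')$, while applying $(A\ot\chi)$, then $(\chi\ot A)$, then $(B\ot\mu_A)$ produces $\sum_k b_k\ot\bigl(\sum_j\gamma_j^k(a)\gamma_i^j(a')\bigr)$; matching the coefficient of $b_k$ gives identity~(2). The remaining and most delicate computation is axiom~(3), the compatibility with $\mu_B$, evaluated on $a\ot b_i\ot b_j$. Here the left side uses the structure constants once, $\chi(a\ot b_ib_j)=\sum_m b_m\ot\bigl(\sum_k\lambda_{ij}^k\gamma_k^m(a)\bigr)$, whereas the right side $(\mu_B\ot A)\circ(B\ot\chi)\circ(\chi\ot B)$ introduces them again after a nested application of two $\gamma$'s, giving $\sum_m b_m\ot\bigl(\sum_{k,l}\lambda_{kl}^m\,\gamma_j^l(\gamma_i^k(a))\bigr)$; comparing coefficients of $b_m$ yields identity~(4).

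The main obstacle is bookkeeping rather than conceptual: in axiom~(3) one must track two independent summations over structure constants and be scrupulous about the order of composition, remembering that the factor $\gamma_j^l\circ\gamma_i^k$ applies $\gamma_i^k$ first. A secondary point to state explicitly is that the coefficient comparisons are genuine equivalences, so the displayed computations also establish the reverse implication: assuming the four identities, one recovers the validity of each twisting axiom on basis elements and extends it by linearity.
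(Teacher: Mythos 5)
Your proposal is correct: each coefficient comparison checks out (including the delicate order of composition $\gamma_j^l\circ\gamma_i^k$ in identity~(4), and the correct pairing of identities (1),(2) with the structure of $A$ and (3),(4) with that of $B$), and the linear-independence/linearity argument makes every step an equivalence, giving both implications. The paper's own proof is simply ``Left to the reader,'' and your direct computation --- expanding each twisting-map axiom on basis elements and comparing coefficients --- is exactly the intended argument, so it fills that gap rather than diverging from it.
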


\begin{proof} Left to the reader.
\end{proof}

\section{The canonical representation}
\label{The canonical representation}
Recall that the regular left representation of a $K$-algebra $C$ is the morphism $l\colon  C \longrightarrow \End_K(C)_C$ defined by $l(c):=l_c$,
where $l_c\in \End_K(C)$ is the map given by $l_c(d):=cd$ for all $d\in C$. Since $l_c(1)=c$ for all $c\in C$, this is a faithful representation. Consequently, for each algebra $L$ the map $l^L\colon B^{\op}\ot L \longrightarrow \End(B^{\op}\ot L)_L$, given by
$$
\begin{tikzpicture}
\draw [->]   (0,-0.2) -- (2,-0.2); \draw (-0.8,-0.2) node {$B^{\op}\ot L$}; \draw (1,0) node {$l^L$}; \draw (3.4,-0.2) node {$\End_K(B^{\op}\ot L)_L$}; \draw (4.8,-0.2) node {,};
\draw [->] (-0.6,-0.5) -- (0.9,-2); \draw [>->] (1.1,-2) -- (2.6,-0.5);
\draw (-0.3,-1.25) node {$l$}; \draw (2.2,-1.25) node {$i$};
\draw (1,-2.3) node {$\End_K(B^{\op}\ot L)_{B^{\op}\ot L}$};
\end{tikzpicture}
$$
in which $i$ is the canonical inclusion, is a faithful representation, called the {\em canonical representation} of $B^{\op}\ot E$.

\smallskip

From now on we set $E:=\End_K(A)$. In the rest of the section we assume that we are in the conditions of Subsection~\ref{aptoalgdimfin} and we denote by $\mathcal{B}'$ the basis $\{b_1^{\op},\cdots,b_n^{\op}\}$ of~$B^{\op}$.

\begin{proposition}\label{Prop condiciones 3 y 4} Items (3) and (4) of Proposition~\ref{condiciones para ser torcimiento} are satisfied if and only if
$$
\begin{tikzpicture}
\draw [->]   (0.2,-0.2) -- (2,-0.2); \draw (-0.2,-0.2) node {$B^{\op}$}; \draw (1.05,0) node {$\rho_{\chi}$}; \draw (2.75,-0.2) node {$B^{\op}\ot E$};
\draw (-0.2,-0.6) node {$b_k^{\op}$}; \draw [|->]   (0.2,-0.6) -- (1.8,-0.6); \draw (2.8,-0.6) node {$\sum_{j} b_j^{\op}\ot \gamma_k^j$};
\end{tikzpicture}
$$
is a morphism of $K$-algebras.
\end{proposition}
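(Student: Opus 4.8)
The plan is to show that the single requirement ``$\rho_{\chi}$ is a morphism of $K$-algebras'' splits into exactly the two conditions (3) and (4) of Proposition~\ref{condiciones para ser torcimiento}: unitality of $\rho_{\chi}$ will match~(3), and multiplicativity will match~(4). Since $\rho_{\chi}$ is prescribed on the basis $\mathcal{B}'$ and extended $K$-linearly, its linearity is automatic, so only these two checks remain. Throughout I would use that $\{b_j^{\op}\}_j$ is a basis of $B^{\op}$, so that $B^{\op}\ot E=\bigoplus_m b_m^{\op}\ot E$ and two elements of $B^{\op}\ot E$ coincide precisely when their $E$-components agree in each slot $b_m^{\op}$.

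First I would record the multiplication data of the target. The structure constants of $B^{\op}$ relative to $\mathcal{B}'$ are the $\lambda_{ji}^k$, since $b_i^{\op}b_j^{\op}=(b_jb_i)^{\op}=\sum_k\lambda_{ji}^k b_k^{\op}$, and the product of $B^{\op}\ot E$ is the tensor product of this with composition in $E=\End_K(A)$. Computing the two sides of the multiplicativity identity separately then gives
$$
\rho_{\chi}(b_i^{\op}b_j^{\op})=\sum_k\lambda_{ji}^k\,\rho_{\chi}(b_k^{\op})=\sum_m b_m^{\op}\ot\Bigl(\sum_k\lambda_{ji}^k\gamma_k^m\Bigr)
$$
and, expanding the product in $B^{\op}\ot E$,
$$
\rho_{\chi}(b_i^{\op})\rho_{\chi}(b_j^{\op})=\sum_{k,l}(b_k^{\op}b_l^{\op})\ot(\gamma_i^k\xcirc\gamma_j^l)=\sum_m b_m^{\op}\ot\Bigl(\sum_{k,l}\lambda_{lk}^m\,\gamma_i^k\xcirc\gamma_j^l\Bigr).
$$
Comparing $E$-components slot by slot, multiplicativity holds for all $i,j$ if and only if $\sum_k\lambda_{ji}^k\gamma_k^m=\sum_{k,l}\lambda_{lk}^m\,\gamma_i^k\xcirc\gamma_j^l$ for all $i,j,m$. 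Renaming $i\leftrightarrow j$ in condition~(4) and then the summation indices $k\leftrightarrow l$ in its right-hand side reproduces exactly this identity, and the argument reverses; hence multiplicativity of $\rho_{\chi}$ is equivalent to~(4).

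For unitality I would use that the unit of $B^{\op}$ is $1=\sum_i\alpha_i b_i^{\op}$ and that of $B^{\op}\ot E$ is $1\ot\ide=\sum_j b_j^{\op}\ot\alpha_j\ide$. Then
$$
\rho_{\chi}(1)=\sum_i\alpha_i\,\rho_{\chi}(b_i^{\op})=\sum_j b_j^{\op}\ot\Bigl(\sum_i\alpha_i\gamma_i^j\Bigr),
$$
and comparing $E$-components shows $\rho_{\chi}(1)=1\ot\ide$ if and only if $\sum_i\alpha_i\gamma_i^j=\alpha_j\ide$ for all $j$, which is precisely condition~(3). Combining the two equivalences yields the statement.

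I expect the only genuine obstacle to be the index bookkeeping: one must keep straight that passing to $B^{\op}$ reverses the structure constants (producing $\lambda_{ji}^k$ and $\lambda_{lk}^m$ rather than $\lambda_{ij}^k$ and $\lambda_{kl}^m$) and that the product in $E$ is composition taken in a fixed order. The matching with~(4) then rests on the harmless double relabelling described above; once these conventions are fixed consistently, both equivalences are a straightforward comparison of coefficients in $\bigoplus_m b_m^{\op}\ot E$.
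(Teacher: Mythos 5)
Your proof is correct and is precisely the direct computation that the paper's proof (``This follows by a direct computation'') leaves implicit: the opposite multiplication on $B^{\op}$ together with the index relabelling $i\leftrightarrow j$, $k\leftrightarrow l$ makes multiplicativity of $\rho_{\chi}$ match condition~(4), and the coefficient comparison for the unit gives condition~(3). Nothing further is needed.
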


\begin{proof} This follows by a direct computation.
\end{proof}

\begin{corollary}\label{Coro condiciones 3 y 4}
Consider the right $E$-linear bijection
$$
\begin{tikzpicture}
\draw [->]   (0.6,0) -- (2,0); \draw (-0.2,0) node {$B^{\op}\ot E$}; \draw (1.3,0.2) node {$\xi_{\mathcal{B}'}$}; \draw (2.3,0.) node {$E^n$};
\draw (-0.2,-0.4) node {$b_k^{\op}\ot 1$}; \draw [|->]   (0.45,-0.4) -- (2,-0.4); \draw (2.3,-0.4) node {$e_k$};
\draw (2.6,-0.1) node {,};
\end{tikzpicture}
$$
where $\{e_1,\dots,e_{n}\}$ is the canonical basis of $E^n$. Items (3) and (4) of  Proposition~\ref{condiciones para ser torcimiento} are satisfied if and only if the map
$$
\begin{tikzpicture}
\draw [->]   (0.2,0) -- (2.4,0); \draw (-0.2,0) node {$B^{\op}$}; \draw (1.3,0.24) node {$\dddot{\rho}_{\!\chi}$}; \draw (3.4,0) node {$\End(E^n)_E$};
\draw (-0.2,-0.4) node {$b^{\op}$}; \draw [|->]   (0.2,-0.4) -- (1.7,-0.4); \draw (3.5,-0.4) node {$\xi_{\mathcal{B}'}\circ l^E(\rho_{\chi}(b^{\op}))\circ \xi^{-1}_{\mathcal{B}'}$};
\end{tikzpicture}
$$
is a representation of $B^{\op}$ or, equivalently, if the map $\hat{\rho}_{\chi}\colon B^{\op}\longrightarrow M_n(E)$, that sends $b^{\op}\in B^{\op}$ to the matrix of $\dddot{\rho}_{\!\chi}(b^{\op})$ with respect to the canonical basis, is a matrix representation.
\end{corollary}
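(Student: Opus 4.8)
The plan is to realize $\dddot{\rho}_{\!\chi}$ as a composite of $\rho_{\chi}$ with two maps that are already under control, and then transport the algebra-morphism property across that composite. Writing $C_{\xi}$ for conjugation $f\mapsto \xi_{\mathcal{B}'}\circ f\circ \xi^{-1}_{\mathcal{B}'}$ by the bijection $\xi_{\mathcal{B}'}$, the very definition of $\dddot{\rho}_{\!\chi}$ yields the factorization
$$
\dddot{\rho}_{\!\chi}=C_{\xi}\circ l^E\circ \rho_{\chi}.
$$
So I would first verify that the two outer maps are well behaved. Since $\xi_{\mathcal{B}'}$ is a right $E$-linear bijection, conjugation by it carries a right $E$-linear endomorphism of $B^{\op}\ot E$ to a right $E$-linear endomorphism of $E^n$, and it visibly respects composition and sends the identity to the identity; hence $C_{\xi}\colon \End_K(B^{\op}\ot E)_E\to \End(E^n)_E$ is an isomorphism of $K$-algebras. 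The canonical representation $l^E$ is faithful, hence an injective unital morphism of $K$-algebras. Consequently $\Phi:=C_{\xi}\circ l^E$ is an injective unital morphism of $K$-algebras.

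The second step is the elementary transfer principle: if $\Phi$ is an injective unital algebra morphism and $g$ is a $K$-linear map whose image lies in the domain of $\Phi$, then $\Phi\circ g$ is an algebra morphism if and only if $g$ is. One direction is immediate, a composite of algebra morphisms being one. For the converse, from $\Phi(g(xy))=\Phi(g(x))\Phi(g(y))=\Phi\bigl(g(x)g(y)\bigr)$ and $\Phi(g(1))=\ide=\Phi(1)$, the injectivity of $\Phi$ gives $g(xy)=g(x)g(y)$ and $g(1)=1$; as $g$ is $K$-linear by hypothesis, it is an algebra morphism. Applying this with $g=\rho_{\chi}$ shows that $\dddot{\rho}_{\!\chi}$ is a representation of $B^{\op}$ if and only if $\rho_{\chi}$ is a morphism of $K$-algebras, and the latter is equivalent to items (3) and (4) of Proposition~\ref{condiciones para ser torcimiento} by Proposition~\ref{Prop condiciones 3 y 4}.

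Finally, for the equivalent matricial formulation I would invoke the standard isomorphism $\End(E^n)_E\cong M_n(E)$ that sends a right $E$-linear endomorphism to its matrix in the canonical basis $\{e_1,\dots,e_n\}$; since $\hat{\rho}_{\chi}$ is by definition this isomorphism composed with $\dddot{\rho}_{\!\chi}$, it is a matrix representation exactly when $\dddot{\rho}_{\!\chi}$ is a representation. I do not expect a serious obstacle here, as the only non-formal input is Proposition~\ref{Prop condiciones 3 y 4}, which may be assumed, and the rest is transport of structure. The point demanding the most care is the module-theoretic bookkeeping in the first step, namely confirming that $l^E$ genuinely lands in the right $E$-linear endomorphisms and that conjugation by $\xi_{\mathcal{B}'}$ preserves this property, so that $C_{\xi}$ is an isomorphism between the correct endomorphism algebras and not merely between the ambient $K$-linear ones.
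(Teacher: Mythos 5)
Your proof is correct and takes essentially the same route as the paper: the paper's one-line proof simply observes that, since $l^E$ is faithful and $\xi_{\mathcal{B}'}$ is bijective, the statement follows from Proposition~\ref{Prop condiciones 3 y 4}, which is exactly the factorization-and-transfer argument you spell out. Your write-up merely makes explicit what the paper leaves implicit, namely the conjugation isomorphism, the transfer principle for composition with an injective unital morphism, and the identification $\End(E^n)_E\cong M_n(E)$.
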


\begin{proof} Since $l^E$ is faithful and $\xi_{\mathcal{B}'}$ is a bijective map, this is an immediate consequence of Proposition~\ref{Prop condiciones 3 y 4}.
\end{proof}

\begin{definition}\label{matrices de estructura} By definition, for each $b_k\in \mathcal{B}$ {\em the structure matrix of $b_k$ with respect to $(B,\mathcal{B})$} is the matrix $[b_k]_{\mathcal{B}}:= \left(\lambda_{kj}^i\right)_{ij}$, where the $\lambda_{uv}^w$'s ($1\le u,v,w \le n$) are the structure constants of $B$ with respect to~$\mathcal{B}$.
\end{definition}

\begin{remark} A direct computation shows that
$$
\hat{\rho}_{\chi}(b_k^{\op})=\begin{pmatrix} \displaystyle{\sum_{l=1}^n} \lambda^1_{1l}\gamma^l_k & \dots & \displaystyle{\sum_{l=1}^n} \lambda^1_{nl}\gamma^l_k \\
                              \vdots & \ddots & \vdots\\
                              \displaystyle{\sum_{l=1}^n} \lambda^n_{1l}\gamma^l_k & \dots & \displaystyle{\sum_{l=1}^n} \lambda^n_{nl}\gamma^l_k\end{pmatrix}
                              =\sum_{j=1}^n \gamma^j_k[b^{\op}_j]_{\mathcal{B}}\quad\text{for all $k$,}
$$
where $[b^{\op}_j]_{\mathcal{B}}$ is the structure matrix of $b^{\op}_j$ with respect to $(B^{\op},\mathcal{B})$.
\end{remark}

\begin{proposition}\label{condiciones 1 y 2} Conditions~(1) and~(2) of Proposition~\ref{condiciones para ser torcimiento} are fulfilled if and only if the function
$
\phi_{\chi}\colon A \longrightarrow \End(B\ot A)_A
$,
defined by $\phi_{\chi}(a)(b_k\ot 1):= \sum_j b_j\ot \gamma^j_k(a)$, is a morphism of algebras or, equivalently, if the map $\hat{\phi}_{\chi}\colon A \longrightarrow M_n(A)$, that sends $a\in A$ to the matrix of $\phi_{\chi}(a)$ with respect to the basis $\mathcal{B}\ot 1:=\{b_1\ot 1,\dots, b_n\ot 1 \}$ of $B\ot A$, is a matrix representation.
\end{proposition}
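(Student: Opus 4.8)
The plan is to exploit that $B\ot A$ is a free right $A$-module with basis $\mathcal{B}\ot 1=\{b_1\ot 1,\dots,b_n\ot 1\}$. First I would observe that, by freeness, prescribing $\phi_{\chi}(a)$ on the basis elements $b_k\ot 1$ determines a unique right $A$-linear endomorphism of $B\ot A$; concretely, for $a'\in A$ one gets $\phi_{\chi}(a)(b_k\ot a')=\sum_j b_j\ot \gamma_k^j(a)a'$. Hence $\phi_{\chi}$ is a well-defined map $A\to\End(B\ot A)_A$, and its $K$-linearity is immediate from the $K$-linearity of the $\gamma_k^j$. The whole statement then reduces to deciding when $\phi_{\chi}$ is unital and multiplicative.

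For unitality I would evaluate $\phi_{\chi}(1)$ on the basis: $\phi_{\chi}(1)(b_k\ot 1)=\sum_j b_j\ot\gamma_k^j(1)$. Since $\{b_j\ot 1\}$ is a free basis, this equals $b_k\ot 1=\ide(b_k\ot 1)$ for every $k$ if and only if $\gamma_k^j(1)=\delta_{kj}1$ for all $j,k$, which is exactly condition~(1). So $\phi_{\chi}(1)=\ide$ iff condition~(1) holds.

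For multiplicativity I would compute $\phi_{\chi}(aa')$ and $\phi_{\chi}(a)\circ\phi_{\chi}(a')$ on a basis element and compare the uniquely determined $A$-coefficients. Using right $A$-linearity to move $\gamma_k^j(a')$ past $\phi_{\chi}(a)$, one finds $\phi_{\chi}(a)\circ\phi_{\chi}(a')(b_k\ot 1)=\sum_{i,j}b_i\ot\gamma_j^i(a)\gamma_k^j(a')$, whereas $\phi_{\chi}(aa')(b_k\ot 1)=\sum_i b_i\ot\gamma_k^i(aa')$. Equating coefficients of $b_i$ yields $\gamma_k^i(aa')=\sum_j\gamma_j^i(a)\gamma_k^j(a')$ for all $i,k$, which is precisely condition~(2) after relabelling the free indices. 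Thus $\phi_{\chi}$ is multiplicative iff condition~(2) holds, and combining the two equivalences gives the first assertion.

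Finally, for the matrix reformulation I would use that sending an endomorphism of the free rank-$n$ right $A$-module $B\ot A$ to its matrix with respect to $\mathcal{B}\ot 1$ is an isomorphism of $K$-algebras $\End(B\ot A)_A\cong M_n(A)$, so $\phi_{\chi}$ is an algebra morphism iff $\hat{\phi}_{\chi}$ is a matrix representation. The computation is essentially routine; the only delicate point, and the step I expect to be the main (mild) obstacle, is the bookkeeping of the order of the factors when applying right $A$-linearity inside the composition, so that the resulting identity matches condition~(2) exactly rather than an opposite-algebra variant.
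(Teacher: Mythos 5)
Your proof is correct and is exactly the direct computation the paper leaves to the reader: evaluate $\phi_{\chi}$ on the free right $A$-module basis $\mathcal{B}\ot 1$, identify unitality of $\phi_{\chi}$ with condition~(1) and multiplicativity with condition~(2), and pass to matrices via the standard $K$-algebra isomorphism $\End(B\ot A)_A\cong M_n(A)$ for a free right module of rank $n$. Your index bookkeeping is right, and the order of factors $\gamma_j^i(a)\gamma_k^j(a')$ matches condition~(2) on the nose, so no opposite-algebra variant arises.
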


\begin{proof} This follows by a direct computation.
\end{proof}

\begin{remark}\label{phi sub chi es inyectiva} From equality~\eqref{eq def de los gamma ij} it follows that $\phi_{\chi}(a)(b\ot 1) = \chi(a\ot b)$ for all $b\in B$. In particular~$\phi_{\chi}(a)(1\ot 1)=1\ot a$, which implies that $\phi_{\chi}$ is injective.
\end{remark}

\begin{corollary} \label{Coro coniciones 1 y 2 en presencia de cond 4} Consider the isomorphisms of right $A$-modules
$$
\begin{tikzpicture}
\draw [->]   (0.4,0) -- (2.3,0); \draw (-0.2,0) node {$B\ot A$}; \draw (1.3,0.3) node {$\grave{\xi}_{\mathcal{B}}$}; \draw (2.6,0) node {$A^n$}; \draw (2.9,-0.1) node {,};
\draw (-0.2,-0.4) node {$b_k\otimes 1$}; \draw [|->]   (0.4,-0.4) -- (2.3,-0.4); \draw (2.6,-0.4) node {$f_k$};
\end{tikzpicture}
$$
where $\{f_1,\dots,f_{n}\}$ is the canonical basis of $A^n$. If $\chi$ is a twisting map, then the function
$$
\begin{tikzpicture}
\draw [->]   (0.3,0) -- (2.4,0); \draw (-0.4,0) node {$B\ot_{\chi} A$}; \draw (1.3,0.3) node {${}_{\chi}l^A_{\mathcal{B}}$}; \draw (3.4,0) node {$\End(A^n)_A$};
\draw (-0.4,-0.4) node {$b\ot a$}; \draw [|->]   (0.3,-0.4) -- (1.7,-0.4); \draw (3.5,-0.4) node {$\grave{\xi}_{\mathcal{B}}\circ l_{b\ot 1}\circ \phi_{\chi}(a)\circ \grave{\xi}_{\mathcal{B}}^{-1}$};
\end{tikzpicture}
$$
is a representation of $B\ot_{\chi} A$ in $\End(A^n)_A$.
\end{corollary}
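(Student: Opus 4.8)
The plan is to recognize ${}_{\chi}l^A_{\mathcal{B}}$ as the regular left representation of $B\ot_{\chi}A$ transported along the right $A$-module isomorphism $\grave{\xi}_{\mathcal{B}}$. Recall that the regular left representation $l\colon B\ot_{\chi}A\to\End_K(B\ot_{\chi}A)$, $x\mapsto l_x$, is a faithful morphism of algebras, and that each $l_x$ is right $A$-linear for the canonical right action, because left and right multiplications commute by associativity. By Remark~\ref{accion a der de A} this canonical right action coincides with the right $A$-module structure on $B\ot A$ used to define $\grave{\xi}_{\mathcal{B}}$, so $l$ corestricts to a morphism $l\colon B\ot_{\chi}A\to\End(B\ot_{\chi}A)_A$.

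Next I would identify the building block $l_{1\ot a}$ with $\phi_{\chi}(a)$. Since $1\ot a=i_A(a)$, a direct computation with $\mu_{\chi}=(\mu_B\ot\mu_A)\circ(B\ot\chi\ot A)$ gives, on each basis vector, $l_{1\ot a}(b_i\ot 1)=(1\ot a)(b_i\ot 1)=\chi(a\ot b_i)$; comparing with Remark~\ref{phi sub chi es inyectiva}, this equals $\phi_{\chi}(a)(b_i\ot 1)$. As both $l_{1\ot a}$ and $\phi_{\chi}(a)$ are right $A$-linear and $B\ot A$ is generated as a right $A$-module by the $b_i\ot 1$, they coincide: $l_{1\ot a}=\phi_{\chi}(a)$ in $\End(B\ot A)_A$. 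Using $b\ot a=(b\ot 1)(1\ot a)$ in $B\ot_{\chi}A$ together with the multiplicativity of $l$, I obtain $l_{b\ot a}=l_{b\ot 1}\circ l_{1\ot a}=l_{b\ot 1}\circ\phi_{\chi}(a)$, so that ${}_{\chi}l^A_{\mathcal{B}}(b\ot a)=\grave{\xi}_{\mathcal{B}}\circ l_{b\ot a}\circ\grave{\xi}_{\mathcal{B}}^{-1}$.

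Finally, the conjugation map $T\mapsto\grave{\xi}_{\mathcal{B}}\circ T\circ\grave{\xi}_{\mathcal{B}}^{-1}$ is an isomorphism of $K$-algebras from $\End(B\ot A)_A$ onto $\End(A^n)_A$, because $\grave{\xi}_{\mathcal{B}}$ is a bijective right $A$-linear map: it sends right $A$-linear endomorphisms to right $A$-linear endomorphisms and respects composition and the identity. Therefore ${}_{\chi}l^A_{\mathcal{B}}$, being the composite of the algebra morphism $l$ with this algebra isomorphism, is itself a morphism of algebras, that is, a representation of $B\ot_{\chi}A$ in $\End(A^n)_A$; moreover it is faithful, since $l$ is.

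I expect the only delicate point to be the identification $l_{1\ot a}=\phi_{\chi}(a)$: one must check the equality of two right $A$-linear maps, which reduces by right $A$-linearity to agreement on the generators $b_i\ot 1$, and there one must carefully unwind the definition of $\mu_{\chi}$ and invoke Remark~\ref{phi sub chi es inyectiva}. Everything else is formal manipulation of the regular representation and of conjugation by an isomorphism of right $A$-modules.
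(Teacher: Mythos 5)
Your proof is correct, and it takes a genuinely different (more structural) route than the paper's. The paper proves multiplicativity of ${}_{\chi}l^A_{\mathcal{B}}$ by a direct check: it combines Proposition~\ref{condiciones 1 y 2} (that $\phi_{\chi}$ is an algebra morphism) with the commutation formula $\phi_{\chi}(a)\circ l_{b_i\ot 1}=\sum_{j}l_{b_j\ot 1}\circ\phi_{\chi}(\gamma^j_i(a))$, valid under condition~(4) of Proposition~\ref{condiciones para ser torcimiento}, and then transports everything through the bijection $\grave{\xi}_{\mathcal{B}}$. You instead identify $\phi_{\chi}(a)$ with the left multiplication $l_{1\ot a}$ (both are right $A$-linear for the canonical right action of Remark~\ref{accion a der de A} and agree on the free right $A$-module basis $\{b_i\ot 1\}$, by Remark~\ref{phi sub chi es inyectiva}), so that $l_{b\ot 1}\circ\phi_{\chi}(a)=l_{(b\ot 1)(1\ot a)}=l_{b\ot a}$ and the whole map becomes the regular representation of $B\ot_{\chi}A$ conjugated by the right $A$-module isomorphism $\grave{\xi}_{\mathcal{B}}$; multiplicativity and unitality are then automatic. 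What your route buys: the paper's commutation formula becomes a consequence rather than an input (expand $l_{(1\ot a)(b_i\ot 1)}$ using multiplicativity of $l$), Proposition~\ref{condiciones 1 y 2} is not needed as a separate ingredient, and faithfulness of ${}_{\chi}l^A_{\mathcal{B}}$ comes for free from faithfulness of $l$ --- a point the paper establishes only in the remark following the corollary. What the paper's route buys: it keeps visible which of the conditions (1)--(4) of Proposition~\ref{condiciones para ser torcimiento} is responsible for which identity, in line with the section's theme of characterizing twisting maps through representations, whereas your argument uses the hypothesis that $\chi$ is a twisting map globally from the start, since the regular representation presupposes that $B\ot_{\chi}A$ is an associative unital algebra.
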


\begin{proof} This follows from Proposition~\ref{condiciones 1 y 2}, the fact that under condition~(4) of Proposition~\ref{condiciones para ser torcimiento}
$$
\phi_{\chi}(a)\circ l_{b_i\ot 1} =\sum_{j}{l_{b_j\ot 1}\circ\phi_{\chi}(\gamma^j_i(a))},
$$
and the fact that $\xi_{\mathcal{B}'}$ is a bijective map.
\end{proof}

\begin{remark} A direct computation shows that
$$
l_{b\ot 1}\circ \phi_{\chi}(a)(b'\ot 1)=(b\ot 1)\chi(a\ot b')\quad\text{for all $a\in A$ and $b,b'\in B$.}
$$
Consequently $l_{b\ot 1}\circ \phi_{\chi}(a)(1\ot 1)=b\ot a$, which implies that ${}_{\chi}l^A_{\mathcal{B}}$ is an injective map.
\end{remark}

\begin{corollary}\label{la otra representacion matricial} If $\chi$ is a twisting map, then the formulas
$$
\varphi_{\chi}(a):= \begin{pmatrix} \gamma^1_1(a) & \dots & \gamma^1_n(a) \\ \vdots & \ddots & \vdots \\ \gamma^n_1(a) & \dots & \gamma^n_n(a)  \end{pmatrix}  \quad\text{and}\quad \varphi_{\chi}(b_k):=\begin{pmatrix} \lambda^1_{k1} \cdot 1_A & \dots & \lambda^1_{kn} \cdot 1_A\\ \vdots & \ddots & \vdots \\ \lambda^n_{k1} \cdot 1_A& \dots & \lambda^n_{kn} \cdot 1_A\end{pmatrix},
$$
for all $a\in A$ and $1\le k\le n$, define a faithful representation $\varphi_{\chi}\colon B\ot_{\chi} A\longrightarrow M_n(A)$.
\end{corollary}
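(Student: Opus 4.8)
The plan is to realize $\varphi_{\chi}$ as the matrix form of the representation ${}_{\chi}l^A_{\mathcal{B}}\colon B\ot_{\chi} A\to \End(A^n)_A$ constructed in Corollary~\ref{Coro coniciones 1 y 2 en presencia de cond 4}, thereby obtaining multiplicativity and injectivity for free. Composing that representation with the canonical $K$-algebra isomorphism $\End(A^n)_A\cong M_n(A)$, which sends an endomorphism to its matrix in the canonical basis $\{f_1,\dots,f_n\}$ of $A^n$, produces an algebra morphism $\Phi\colon B\ot_{\chi} A\to M_n(A)$. Since ${}_{\chi}l^A_{\mathcal{B}}$ is injective by the remark following Corollary~\ref{Coro coniciones 1 y 2 en presencia de cond 4} and the isomorphism $\End(A^n)_A\cong M_n(A)$ is injective, $\Phi$ is faithful. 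It then suffices to check that $\Phi$ takes the prescribed values on the generators $1\ot a$ and $b_k\ot 1$ of $B\ot_{\chi} A$; since these generate the algebra and $\Phi$ is multiplicative, this shows that the stated formulas extend uniquely to the faithful algebra morphism $\Phi=\varphi_{\chi}$.

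First I would compute $\Phi(1\ot a)$. As $l_{1\ot 1}=\ide$, this equals the matrix of $\grave{\xi}_{\mathcal{B}}\circ \phi_{\chi}(a)\circ \grave{\xi}_{\mathcal{B}}^{-1}$, that is, $\hat{\phi}_{\chi}(a)$ from Proposition~\ref{condiciones 1 y 2}. From $\phi_{\chi}(a)(b_k\ot 1)=\sum_j b_j\ot \gamma_k^j(a)$ one reads off that its $(i,j)$ entry is $\gamma^i_j(a)$, which is precisely $\varphi_{\chi}(a)$. Next I would compute $\Phi(b_k\ot 1)$. Condition~(1) of Proposition~\ref{condiciones para ser torcimiento} gives $\gamma_j^i(1)=\delta_{ij}1$, whence $\phi_{\chi}(1)=\ide$, so $\Phi(b_k\ot 1)$ is the matrix of $\grave{\xi}_{\mathcal{B}}\circ l_{b_k\ot 1}\circ \grave{\xi}_{\mathcal{B}}^{-1}$. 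Using $\chi(1\ot b_j)=b_j\ot 1$ we obtain $(b_k\ot 1)(b_j\ot 1)=b_kb_j\ot 1=\sum_i \lambda_{kj}^i\, b_i\ot 1$, so this endomorphism sends $f_j\mapsto \sum_i \lambda_{kj}^i f_i$ and its $(i,j)$ entry is $\lambda^i_{kj}\cdot 1_A$, matching $\varphi_{\chi}(b_k)$.

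The only genuine content beyond Corollary~\ref{Coro coniciones 1 y 2 en presencia de cond 4} is the index bookkeeping, where care is needed with the convention that in $\gamma^i_j$ and $\lambda^i_{kj}$ the superscript labels the row and the subscript the column. The multiplicativity and well-definedness of $\varphi_{\chi}$ on all of $B\ot_{\chi} A$ cost nothing, because $\varphi_{\chi}$ is never defined by the generator formulas directly but is realized as $\Phi$. I therefore expect no real obstacle: the clean point is that the factorization $b\ot a=(b\ot 1)(1\ot a)$ in $B\ot_{\chi} A$ translates under $\Phi$ into $\varphi_{\chi}(b)\varphi_{\chi}(a)$, so that the two separate formulas on $B$ and on $A$ automatically assemble into a single representation of the twisted tensor product.
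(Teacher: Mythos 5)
Your proposal is correct and follows essentially the same route as the paper: the paper's proof also realizes $\varphi_{\chi}$ as the matrix form of ${}_{\chi}l^A_{\mathcal{B}}$ from Corollary~\ref{Coro coniciones 1 y 2 en presencia de cond 4}, noting that the matrices of ${}_{\chi}l^A_{\mathcal{B}}(1\ot a)$ and ${}_{\chi}l^A_{\mathcal{B}}(b_k\ot 1)$ in the canonical basis of $A^n$ are exactly $\varphi_{\chi}(a)$ and $\varphi_{\chi}(b_k)$, with faithfulness coming from the remark that ${}_{\chi}l^A_{\mathcal{B}}$ is injective. Your write-up merely makes explicit the index bookkeeping and the generator computations that the paper leaves as ``it suffices to note.''
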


\begin{proof} It suffices to note that the matrices of ${}_{\chi}l^A_{\mathcal{B}}(1\ot a)$ and
${}_{\chi}l^A_{\mathcal{B}}(b_k\ot 1)$ in the canonical basis of $A^n$, are the matrices $\varphi_{\chi}(a)$ and $\varphi_{\chi}(b_k)$, respectively.
\end{proof}

\section{Extensions of twisting maps}\label{seccionextension}
Let $A$, $B$ and $C$ be $K$-algebras with $B$ and $C$ finite dimensional. Write $D:=B\times C$. Let $i_B\colon B \to D$, $i_C\colon C \to D$, $p_B\colon D \to B$ and $p_C\colon D \to C$ be the canonical maps. In this section we study on one hand twisting maps $\psi\colon A\ot D\longrightarrow D\ot A$ such that the map $\Theta:=(p_B\ot A)\circ\psi\circ (A\ot i_B)$ is a twisting map, and on the other hand twisting maps such that both maps, $\Theta$ and $\Upsilon:= (p_C\ot A)\circ\psi\circ (A\ot i_C)$, are twisting maps.

Fix basis $\mathcal{B}=\{b_1,b_2,\hdots,b_n\}$ and $\mathcal{C}:= \{c_1,\dots,c_m\}$ of $B$ and $C$, respectively, and let $\mathcal{D}$ denote the ordered basis $\{b_1,b_2,\hdots,b_n,c_1,\dots,c_m\}$ of $\mathcal{D}$, where we identify $b_i$ with $(b_i,0)$ and $c_i$ with $(0,c_i)$. So, $\mathcal{D}=\{d_1,\dots,d_{m+n}\}$, where
$$
d_i:=\begin{cases} b_i &\text{if $i\le n$,} \\ c_{i-n} &\text{if $i > n$.}
\end{cases}
$$
The structure matrices of the elements of $D$ with respect to $(D,\mathcal{D})$ are
$$
\left[b_k\right]_{\mathcal{D}} =\begin{pmatrix} \left[b_k\right]_{\mathcal{B}} & 0 \\ 0 & 0 \end{pmatrix} \text{ ($1\le k \le n$)} \qquad\text{and}\qquad \left[c_k\right]_{\mathcal{D}} =\begin{pmatrix} 0 & 0 \\ 0 & \left[c_l\right]_{\mathcal{C}}  \end{pmatrix} \text{ ($1\le l \le m$)}.
$$
Let $\alpha_1,\dots,\alpha_n,\beta_1,\dots,\beta_m$ be the scalars such that
$$
1_D = (1_B, 1_C) =\sum_{k=1}^n\alpha_k b_k +\sum_{l=1}^m \beta_l c_l.
$$

\smallskip

\noindent Each map $\psi\colon A\ot D \longrightarrow D\ot A$ determines uniquely functions $\tilde{\gamma}_i^j$ ($1\le i,j < m+n$), such that
\begin{equation}\label{ecuaciongammatilde}
\psi(a\ot d_i)= \sum_{j=1}^{m+n} d_j \ot \tilde{\gamma}_i^j(a)
\end{equation}
Let $\lambda^k_{ij}$ ($1\le i,j,k\le n$) and $\eta^k_{ij}$ ($1\le i,j,k\le m$) be the structure constants of $B$ with respect to $\mathcal{B}$ and of $C$ with respect to $\mathcal{C}$, respectively.
Write
\allowdisplaybreaks
\begin{align*}
 & B^{(1)}_k:=\begin{pmatrix} \displaystyle{\sum_{l=1}^n} \lambda^1_{1l}\tilde{\gamma}^l_k & \dots & \displaystyle{\sum_{l=1}^n} \lambda^1_{nl}\tilde{\gamma}^l_k \\
                              \vdots & \ddots & \vdots\\
                              \displaystyle{\sum_{l=1}^n} \lambda^n_{1l}\tilde{\gamma}^l_k & \dots & \displaystyle{\sum_{l=1}^n} \lambda^n_{nl}\tilde{\gamma}^l_k
\end{pmatrix}
\in M_n(E) && \text{($1\le k \le n$),}\\[1.5\jot]
 & B^{(2)}_k:=\begin{pmatrix} \displaystyle{\sum_{l=1}^m} \eta^1_{1l}\tilde{\gamma}^{l+n}_k & \dots & \displaystyle{\sum_{l=1}^m} \eta^1_{ml}\tilde{\gamma}^{l+n}_k \\
                              \vdots & \ddots & \vdots\\
                              \displaystyle{\sum_{l=1}^m} \eta^m_{1l}\tilde{\gamma}^{l+n}_k & \dots & \displaystyle{\sum_{l=1}^m} \eta^m_{ml}\tilde{\gamma}^{l+n}_k
\end{pmatrix}
\in M_m(E) && \text{($1\le k \le n$),}\\[1.5\jot]
 & C^{(1)}_k:=\begin{pmatrix} \displaystyle{\sum_{l=1}^n} \lambda^1_{1l}\tilde{\gamma}^l_{k+n} & \dots & \displaystyle{\sum_{l=1}^n} \lambda^1_{nl}\tilde{\gamma}^l_{k+n} \\
                              \vdots & \ddots & \vdots\\
                              \displaystyle{\sum_{l=1}^n} \lambda^n_{1l}\tilde{\gamma}^l_{k+n} & \dots & \displaystyle{\sum_{l=1}^n} \lambda^n_{nl}\tilde{\gamma}^l_{k+n}
\end{pmatrix}
\in M_n(E) && \text{($1\le k \le m$)}
\intertext{and}
 & C^{(2)}_k:=\begin{pmatrix} \displaystyle{\sum_{l=1}^m} \eta^1_{1l}\tilde{\gamma}^{l+n}_{k+n} & \dots & \displaystyle{\sum_{l=1}^m} \eta^1_{ml}\tilde{\gamma}^{l+n}_{k+n} \\
                              \vdots & \ddots & \vdots\\
                              \displaystyle{\sum_{l=1}^m} \eta^m_{1l}\tilde{\gamma}^{l+n}_{k+n} & \dots & \displaystyle{\sum_{l=1}^m} \eta^m_{ml}\tilde{\gamma}^{l+n}_{k+n}
\end{pmatrix}
\in M_m(E) && \text{($1\le k \le m$).}
\end{align*}
Suppose that $\psi$ is a twisting map. The representation $\hat{\rho}_{\psi}\colon D^{\op} \longrightarrow M_{m+n}(E)$ of  Corollary~\ref{Coro condiciones 3 y 4} is given by
\begin{align}
&\hat{\rho}_{\psi}(b^{\op}_k)=\begin{pmatrix} B^{(1)}_k & 0\\ 0 & B^{(2)}_k\end{pmatrix} \text{ $(1 \le k \le n)$}\label{rhohatsubpsidelosb}
\shortintertext{and}
&\hat{\rho}_{\psi}(c^{\op}_k)=\begin{pmatrix} C^{(1)}_k & 0\\ 0 & C^{(2)}_k\end{pmatrix} \text{ $(1 \le k \le m)$.}\label{rhohatsubpsidelosc}
\end{align}
\begin{lemma}\label{lema para citar}
The map $\hat{\rho}_{\psi}\colon D^{\op} \longrightarrow M_{m+n}(E)$ defined by~(3.3)-(3.4) is a representation if and only if the following conditions are satisfied:
\begin{align}
& B_i^{(l)} B_j^{(l)}=\sum_{k=1}^n \lambda^k_{ji}B_k^{(l)} &&\text{for $l\in \{1,2\}$ and  $1\le i,j\le n$,}\label{equno}\\
& C_i^{(l)} C_j^{(l)}=\sum_{k=1}^m \eta^k_{ji}C_k^{(l)} &&\text{for $l\in \{1,2\}$ and  $1\le i,j\le m$,}\\
& B_i^{(l)} C_j^{(l)}= C_j^{(l)} B_i^{(l)} =0 &&\text{for $l\in \{1,2\}$, $1\le i\le n$ and $1\le j \le m$,}\\
& \sum_{i=1}^n \alpha_i B_i^{(1)}+\sum_{j=1}^m \beta_j C_j^{(1)}=\ide_{M_n(E)}&&\label{otra mas}\\
\shortintertext{and}
& \sum_{i=1}^n \alpha_i B_i^{(2)}+\sum_{j=1}^m \beta_j C_j^{(2)}=\ide_{M_m(E)}.\label{eqdos}
\end{align}
\end{lemma}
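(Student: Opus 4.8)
\section*{Proof proposal}

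The plan is to exploit two structural facts. First, $\hat{\rho}_{\psi}$ is $K$-linear by construction and $\{b_1^{\op},\dots,b_n^{\op},c_1^{\op},\dots,c_m^{\op}\}$ is a basis of $D^{\op}$; hence $\hat{\rho}_{\psi}$ is a morphism of algebras precisely when it preserves the unit and is multiplicative on this basis. Second, since $D=B\times C$ is a direct product, the only nonzero structure constants of $D$ are those of $B$ among the $b$'s and those of $C$ among the $c$'s, while all mixed products vanish. Passing to $D^{\op}$ this reads $b_i^{\op}b_j^{\op}=\sum_k\lambda^k_{ji}b_k^{\op}$, $c_i^{\op}c_j^{\op}=\sum_k\eta^k_{ji}c_k^{\op}$, and $b_i^{\op}c_j^{\op}=c_j^{\op}b_i^{\op}=0$.

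First I would record that every matrix in the image is block diagonal, of the form $\mathrm{diag}(X^{(1)},X^{(2)})$ with $X^{(1)}\in M_n(E)$ and $X^{(2)}\in M_m(E)$, as displayed in~\eqref{rhohatsubpsidelosb}--\eqref{rhohatsubpsidelosc}. Consequently the product of two such images is again block diagonal and multiplies blockwise, and the image of any linear combination of basis elements is block diagonal with each block the corresponding linear combination. Thus each multiplicativity equation on the basis splits into two independent equations, one in the top-left $M_n(E)$ corner (the superscript $(1)$ block) and one in the bottom-right $M_m(E)$ corner (the $(2)$ block).

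Next I would run through the four types of basis products and the unit. Comparing $\hat{\rho}_{\psi}(b_i^{\op})\hat{\rho}_{\psi}(b_j^{\op})$ with $\hat{\rho}_{\psi}(b_i^{\op}b_j^{\op})=\sum_k\lambda^k_{ji}\hat{\rho}_{\psi}(b_k^{\op})$ gives, blockwise, exactly the two relations~\eqref{equno} for $l=1,2$; the $c$-$c$ products give the analogous relations for the $C_k^{(l)}$; and the mixed products $b_i^{\op}c_j^{\op}$ and $c_j^{\op}b_i^{\op}$, whose images must vanish, give the orthogonality relations $B_i^{(l)}C_j^{(l)}=C_j^{(l)}B_i^{(l)}=0$. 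Finally, writing $1_D=\sum_i\alpha_i b_i+\sum_j\beta_j c_j$ and applying $\hat{\rho}_{\psi}$, the demand $\hat{\rho}_{\psi}(1_D)=\ide_{M_{m+n}(E)}=\mathrm{diag}(\ide_{M_n(E)},\ide_{M_m(E)})$ yields precisely~\eqref{otra mas} and~\eqref{eqdos}. Since these computations are reversible---the listed conditions are exactly the blockwise content of unitality and of multiplicativity on a basis---the equivalence follows in both directions.

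The content here is bookkeeping rather than a genuine difficulty. The only points demanding care are the index reversal from $\lambda^k_{ij}$ to $\lambda^k_{ji}$ forced by passing to the opposite algebra, and the observation that the block-diagonal form decouples the $(1)$- and $(2)$-blocks completely, so that the single map $\hat{\rho}_{\psi}$ being a representation is equivalent to the full list of displayed matrix identities rather than to some coupled condition.
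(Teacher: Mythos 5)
Your proof is correct and is exactly the ``direct computation'' that the paper leaves to the reader: reduce unitality and multiplicativity to the basis $\{b_i^{\op},c_j^{\op}\}$, use that the structure constants of $D=B\times C$ vanish on mixed products and get reversed indices in $D^{\op}$, and observe that the block-diagonal form of $\hat{\rho}_{\psi}$ decouples each identity into its $(1)$- and $(2)$-blocks. Nothing is missing, and your attention to the $\lambda^k_{ij}\mapsto\lambda^k_{ji}$ reversal is precisely the point where such a computation could otherwise go wrong.
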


\begin{proof}
This follows from a direct computation.
\end{proof}

The representation $\hat{\phi}_{\psi}\colon A \longrightarrow M_{m+n}(A)$ of Proposition~\ref{condiciones 1 y 2} is given by
\begin{equation}\label{representacion phi para D}
\hat{\phi}_{\psi}(a)=
\begin{pmatrix}
\tilde{\gamma}^{1}_{1}(a) & \dots & \tilde{\gamma}^{1}_{n+m}(a) \\
\vdots & \ddots & \vdots \\
\tilde{\gamma}^{n+m}_{1}(a) & \dots & \tilde{\gamma}^{n+m}_{n+m}(a)
\end{pmatrix}
\quad
\text{for $a\in A$,}
\end{equation}
Write
$$
\hat{\phi}_{\chi}(a)=
\begin{pmatrix}
\Gamma^0_0(a) & \Gamma^0_1(a) \\
\Gamma^1_0(a) & \Gamma^1_1(a)
\end{pmatrix},
$$
with $\Gamma^0_0(a)\in M_n(K)$, $\Gamma^0_1(a)\in M_{n\times m}(K)$, $\Gamma^1_0(a)\in M_{m\times n}(K)$ and $\Gamma^1_1(a)\in M_m(K)$.

\begin{remark}\label{significado de Gamma^0_1=0} Equality~\eqref{ecuaciongammatilde} shows that $\Gamma^0_1=0$ if and only if $\psi(A\ot C)\subseteq C\ot A$.
\end{remark}

\begin{lemma}\label{lema que faltaba}
The map $\hat{\phi}_{\psi}\colon A \longrightarrow M_{m+n}(A)$ defined by~\ref{representacion phi para D}, is a representation if and only if the following conditions are satisfied:
\begin{align}
&\Gamma^0_0(1)=I_n,\quad \Gamma^1_1(1)=I_m,\quad \Gamma^0_1(1)=0, \quad \Gamma^1_0(1)=0\label{eqtres}
\shortintertext{and}
&\Gamma^p_q(aa')=\Gamma^p_0(a)\Gamma^0_q(a')+\Gamma^p_1(a)\Gamma^1_q(a') \quad \text{for $a,a'\in A$ and $0\le p,q\le 1$.}\label{eqcuatro}
\end{align}
\end{lemma}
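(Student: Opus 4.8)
The plan is to unwind the meaning of ``representation'' into its two defining requirements and then read each of them off blockwise. Recall that $\hat{\phi}_{\psi}$ is a representation precisely when it is a morphism of $K$-algebras $A\to M_{m+n}(A)$, hence (morphisms being unital) a $K$-linear map that sends $1_A$ to the identity matrix and is multiplicative. Each entry $\tilde{\gamma}^j_i$ of the matrix $\hat{\phi}_{\psi}(a)$ in \eqref{representacion phi para D} is, by its very definition in \eqref{ecuaciongammatilde}, a component of the $K$-linear map $\psi$, hence $K$-linear; consequently $\hat{\phi}_{\psi}$ is automatically $K$-linear, and the content of the statement reduces to translating the two remaining requirements
$$
\hat{\phi}_{\psi}(1)=I_{m+n}\qquad\text{and}\qquad \hat{\phi}_{\psi}(aa')=\hat{\phi}_{\psi}(a)\,\hat{\phi}_{\psi}(a')\quad(a,a'\in A)
$$
into the language of the block decomposition induced by $m+n=n+m$, whose $(0,0)$, $(0,1)$, $(1,0)$, $(1,1)$ corners have respective sizes $n\times n$, $n\times m$, $m\times n$, $m\times m$.

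First I would treat the unit. Writing the identity matrix of $M_{m+n}(A)$ in block form,
$$
I_{m+n}=\begin{pmatrix} I_n & 0\\ 0 & I_m\end{pmatrix},
$$
and comparing it corner by corner with $\hat{\phi}_{\psi}(1)=\left(\begin{smallmatrix}\Gamma^0_0(1) & \Gamma^0_1(1)\\ \Gamma^1_0(1) & \Gamma^1_1(1)\end{smallmatrix}\right)$ yields exactly the four equalities $\Gamma^0_0(1)=I_n$, $\Gamma^1_1(1)=I_m$, $\Gamma^0_1(1)=0$ and $\Gamma^1_0(1)=0$ collected in \eqref{eqtres}; this step is visibly reversible.

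Next I would treat multiplicativity. Because the chosen partition into an $n$-block and an $m$-block is the same on the rows and on the columns, the product $\hat{\phi}_{\psi}(a)\hat{\phi}_{\psi}(a')$ may be computed blockwise: its $(p,q)$ corner is $\sum_{r=0}^1\Gamma^p_r(a)\Gamma^r_q(a')=\Gamma^p_0(a)\Gamma^0_q(a')+\Gamma^p_1(a)\Gamma^1_q(a')$, the two summands having compatible sizes for every $p,q\in\{0,1\}$. Equating this with the $(p,q)$ corner of $\hat{\phi}_{\psi}(aa')$, namely $\Gamma^p_q(aa')$, gives precisely \eqref{eqcuatro}; conversely, assembling the four identities \eqref{eqcuatro} over $p,q\in\{0,1\}$ reconstitutes the single matrix identity $\hat{\phi}_{\psi}(aa')=\hat{\phi}_{\psi}(a)\hat{\phi}_{\psi}(a')$. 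Together with the unit computation this establishes both implications.

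The argument carries no genuine obstacle; it is, as with Lemma~\ref{lema para citar}, a direct computation, and the only point requiring care is the index bookkeeping---keeping the block sizes straight so that each product $\Gamma^p_r(a)\Gamma^r_q(a')$ is defined and so that the identity matrix is recognised in its block-diagonal form. This is exactly the $\hat{\phi}$-analogue of the translation carried out for $\hat{\rho}_{\psi}$ in Lemma~\ref{lema para citar}.
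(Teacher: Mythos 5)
Your proof is correct and matches the paper's intent: the paper's own proof is simply ``Straightforward,'' i.e.\ exactly the direct blockwise computation you carry out (unitality of the algebra morphism giving \eqref{eqtres}, block matrix multiplication giving \eqref{eqcuatro}, with $K$-linearity automatic from the $K$-linearity of $\psi$). Nothing is missing, and no different route is taken.
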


\begin{proof}
Straightforward.
\end{proof}

\begin{lemma}\label{extension01}
Let $\psi\colon A\ot D \longrightarrow D\ot A$ be a map and let $\tilde{\gamma}_i^j$ ($1\le i,j \le m+n$) be the maps determined
by the equality~\eqref{ecuaciongammatilde}.
Let $i_B\colon B\to D$ and $p_B\colon D\to B$ be the canonical inclusion and the canonical surjection, respectively,
and suppose that the map $\Theta:=(p_B\ot A)\circ\psi\circ (A\ot i_B)$ is a twisting map.
Then $\psi$ is a twisting map if and only if the following conditions are satisfied
\begin{align}
& B_i^{(2)} B_j^{(2)}=\sum_{k=1}^n \lambda^k_{ji}B_k^{(2)} &&\text{for $1\le i,j\le n$,}\label{primera}\\
& C_j^{(1)}=0,\\
& C_i^{(2)} C_j^{(2)}=\sum_{k=1}^m \eta^k_{ji}C_k^{(2)} &&\text{for $1\le i,j\le m$,}\label{eqaa}\\
& B_i^{(2)} C_j^{(2)}= C_j^{(2)} B_i^{(2)} =0 &&\text{for $1\le i\le n$ and $1\le j \le m$,}\label{eqab}\\
& \sum_{i=1}^n \alpha_i B_i^{(2)}+\sum_{j=1}^m \beta_j C_j^{(2)}=\ide_{M_m(E)},\label{eqac}\\
& \Gamma^p_1(aa')=\Gamma^p_0(a)\Gamma^0_1(a')+\Gamma^p_1(a)\Gamma^1_1(a')&&\text{for $a,a'\in A$ and $0\le p\le 1$,}\label{eqad}\\
& \Gamma^1_0(aa')=\Gamma^1_0(a)\Gamma^0_0(a')+\Gamma^1_1(a)\Gamma^1_0(a')&&\text{for $a,a'\in A$,}\label{eqae}\\
& \Gamma^0_1(a)\Gamma^1_0(a')=0&&\text{for $a,a'\in A$}\label{eqaf}
\shortintertext{and}
&\Gamma^1_1(1)=I_m,\quad \Gamma^0_1(1)=0, \quad \Gamma^1_0(1)=0.\label{eqag}
\end{align}
\end{lemma}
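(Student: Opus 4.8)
The plan is to recast the whole statement in terms of the two matrix representations $\hat\rho_\psi$ and $\hat\phi_\psi$ and then subtract off the information already carried by $\Theta$. By Proposition~\ref{condiciones para ser torcimiento}, Corollary~\ref{Coro condiciones 3 y 4} and Proposition~\ref{condiciones 1 y 2}, the map $\psi$ is a twisting map if and only if both $\hat\rho_\psi$ and $\hat\phi_\psi$ are matrix representations; by Lemma~\ref{lema para citar} and Lemma~\ref{lema que faltaba} this amounts to the simultaneous validity of \eqref{equno}--\eqref{eqdos} and \eqref{eqtres}--\eqref{eqcuatro}. Thus the lemma reduces to proving that, under the hypothesis that $\Theta$ is a twisting map, this full package is equivalent to \eqref{primera}--\eqref{eqag}.

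First I would record what $\Theta$ contributes. The maps $\tilde\gamma^j_i$ with $1\le i,j\le n$ are precisely the structure maps of $\Theta$, so $\hat\rho_\Theta(b_k^{\op})=B_k^{(1)}$ and $\hat\phi_\Theta(a)=\Gamma^0_0(a)$. Applying Corollary~\ref{Coro condiciones 3 y 4} and Proposition~\ref{condiciones 1 y 2} to the twisting map $\Theta$ over $B$ shows that $\hat\rho_\Theta$ and $\hat\phi_\Theta$ are representations; written out (the computations of Lemma~\ref{lema para citar} and Lemma~\ref{lema que faltaba} restricted to the single algebra $B$) this yields four identities that I will use freely: (A1) $B_i^{(1)}B_j^{(1)}=\sum_k\lambda^k_{ji}B_k^{(1)}$, (A2) $\sum_i\alpha_i B_i^{(1)}=\ide_{M_n(E)}$, (A3) $\Gamma^0_0(1)=I_n$, and (A4) $\Gamma^0_0(aa')=\Gamma^0_0(a)\Gamma^0_0(a')$.

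Most of the remaining equivalence is bookkeeping. Many conditions match up verbatim: the $l=2$ instances of \eqref{equno} and of its $C$-analogue give \eqref{primera} and \eqref{eqaa}, the $l=2$ orthogonality gives \eqref{eqab}, the $M_m(E)$-unit relation \eqref{eqdos} is \eqref{eqac}, the $(p,q)\in\{(0,1),(1,0),(1,1)\}$ instances of \eqref{eqcuatro} give \eqref{eqad} and \eqref{eqae}, and the normalizations $\Gamma^1_1(1)=I_m$, $\Gamma^0_1(1)=0$, $\Gamma^1_0(1)=0$ of \eqref{eqtres} are \eqref{eqag}. The $l=1$ instance of \eqref{equno} and the $\Gamma^0_0$-parts of \eqref{eqtres} and \eqref{eqcuatro} are absorbed by (A1) and (A3)--(A4). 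What remains are exactly the two conditions that are not raw entries of either list, and these form the heart of the argument.

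The first is $C_j^{(1)}=0$. It is not among \eqref{equno}--\eqref{eqdos}; rather it is extracted by taking the $l=1$ orthogonality relations $B_i^{(1)}C_j^{(1)}=0$ (for all $i$), forming the $\alpha_i$-weighted sum, and invoking (A2): $\big(\sum_i\alpha_i B_i^{(1)}\big)C_j^{(1)}=\ide_{M_n(E)}\,C_j^{(1)}=C_j^{(1)}=0$. Conversely, once $C_j^{(1)}=0$ the $l=1$ instances of the $C$-analogue of \eqref{equno} and of the orthogonality become trivial, and \eqref{otra mas} reduces to (A2). The second is \eqref{eqaf}: subtracting (A4) from the $(p,q)=(0,0)$ instance of \eqref{eqcuatro}, namely $\Gamma^0_0(aa')=\Gamma^0_0(a)\Gamma^0_0(a')+\Gamma^0_1(a)\Gamma^1_0(a')$, leaves exactly $\Gamma^0_1(a)\Gamma^1_0(a')=0$, and reading this backwards recovers that instance from (A4) and \eqref{eqaf}. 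Collecting these implications in both directions gives the equivalence. I expect the genuine obstacle to be the first crux: seeing that $C_j^{(1)}=0$ is forced rather than assumed, and that it is the $B$--$C$ orthogonality together with the unit normalization (A2) inherited from $\Theta$ that produces it.
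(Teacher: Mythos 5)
Your proof is correct, and its skeleton coincides with the paper's: both reduce the statement, via Corollary~\ref{Coro condiciones 3 y 4}, Proposition~\ref{condiciones 1 y 2}, Lemma~\ref{lema para citar} and Lemma~\ref{lema que faltaba}, to showing that conditions \eqref{equno}--\eqref{eqdos}, \eqref{eqtres} and \eqref{eqcuatro} are equivalent, in the presence of the identities you call (A1)--(A4) supplied by $\Theta$ (the paper records exactly these), to \eqref{primera}--\eqref{eqag}; and your handling of \eqref{eqaf}, cancelling (A4) against the $(p,q)=(0,0)$ instance of \eqref{eqcuatro}, is precisely what the paper leaves implicit when it asserts that \eqref{eqtres} and \eqref{eqcuatro} are equivalent to \eqref{eqad}--\eqref{eqag}. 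The one genuine divergence is at the crux you flagged, the forward derivation of $C_j^{(1)}=0$. The paper extracts from \eqref{otra mas} and (A2) only the weighted relation $\sum_{j}\beta_j C_j^{(1)}=0$, and then returns to the representation itself: it evaluates $\hat{\rho}_{\psi}$ at $1_C$, whose image then has vanishing upper-left block, and uses multiplicativity of $\hat{\rho}_{\psi}$ together with $1_C c_j=c_j$ to conclude $C_j^{(1)}=C_j^{(1)}\bigl(\sum_{l}\beta_l C_l^{(1)}\bigr)=0$; unwound, this rests on the $l=1$ case of the $C$--$C$ relations of Lemma~\ref{lema para citar}. You instead use the $l=1$ case of the $B$--$C$ orthogonality, $B_i^{(1)}C_j^{(1)}=0$, summed against the $\alpha_i$, so that (A2) gives $C_j^{(1)}=\ide_{M_n(E)}C_j^{(1)}=0$ in one line. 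Both routes are valid and of the same depth; yours is slightly more economical in that it never re-invokes the representation $\hat{\rho}_{\psi}$ or the idempotent $1_C$, staying entirely within the listed conditions, whereas the paper's phrasing makes the role of $1_C$ visible. The backward direction is in both cases the same routine verification.
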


\begin{proof} By Corollary~\ref{Coro condiciones 3 y 4}, Proposition~\ref{condiciones 1 y 2},  Lemma~\ref{lema para citar} and Lemma~\ref{lema que faltaba}, we know that~$\psi$ is a twisting map if and only if conditions~\eqref{equno}-\eqref{eqdos}, \eqref{eqtres} and \eqref{eqcuatro} are fulfilled. Since $\Gamma^0_0(a)=\hat{\phi}_{\Theta}(a)$ and $\Theta$ is a twisting map, we have
$$
\Gamma^0_0(aa')=\Gamma^0_0(a)\Gamma^0_0(a')\quad\text{and}\quad\Gamma^0_0(1)=I_n.
$$
Hence conditions~\eqref{eqtres} and~\eqref{eqcuatro} are equivalent to conditions~\eqref{eqad}-\eqref{eqag}. Moreover, again since $\Theta$ is a twisting map and $B_i^{(1)}=\hat{\rho}_{\Theta}(b^{op}_i)$, we have
$$
B_i^{(1)} B_j^{(1)}=\sum_{k=1}^n \lambda^k_{ji}B_k^{(1)}\quad\text{and}\quad\sum_{i=1}^n \alpha_i B_i^{(1)}=\ide_{M_n(E)}.
$$
So, condition~\eqref{equno} is satisfied for $l=1$, and condition~\eqref{otra mas} becomes equivalent to
$$
\sum_{j=1}^m \beta_j C_j^{(1)}=0.
$$
Consequently, if $\psi$ is a twisting map, then
$$
\hat{\rho}_{\psi}(1_C)=\sum_{j=1}^{m} \beta_j\hat{\rho}_{\psi}(c_j) =\sum_{j=1}^{m} \beta_j \begin{pmatrix} 0& 0\\ 0 & C^{(2)}_j\end{pmatrix},
$$
since $\hat{\rho}_{\psi}$ is a representation, and so~$C^{(1)}_j=0$. Conversely, if conditions~\eqref{primera}-\eqref{eqag} are fulfilled and $\Theta$ is a twisting map, then a direct computation shows that conditions~\eqref{equno}-\eqref{eqdos}, \eqref{eqtres} and \eqref{eqcuatro} are satisfied, which proves that $\psi$ is a twisting map
\end{proof}

\begin{proposition}\label{teorema fundamental sobre extensiones}
Let $\psi\colon A\ot D \longrightarrow D\ot A$ be a map and let $\tilde{\gamma}_i^j$ ($1\le i,j \le m+n$) be the maps determined
by the equality~\eqref{ecuaciongammatilde}.
Let $i_B\colon B\to D$ and $p_B\colon D\to B$ be the canonical inclusion and the canonical surjection, respectively,
and suppose that the map $\Theta:=(p_B\ot A)\circ\psi\circ (A\ot i_B)$ is a twisting map.
Then $\psi$ is a twisting map if and only if the following conditions are satisfied
\begin{align}
& B_i^{(2)} B_j^{(2)}=\sum_{k=1}^n \lambda^k_{ji}B_k^{(2)} &&\text{for $1\le i,j\le n$,}\label{primera1}\\
& C_j^{(1)}=0,\\
& C_i^{(2)} C_j^{(2)}=\sum_{k=1}^m \eta^k_{ji}C_k^{(2)} &&\text{for $1\le i,j\le m$,}\label{eqaa1}\\
& B_i^{(2)} C_j^{(2)}= C_j^{(2)} B_i^{(2)} =0 &&\text{for $1\le i\le n$ and $1\le j \le m$,}\label{eqab1}\\
& \sum_{i=1}^n \alpha_i B_i^{(2)}+\sum_{j=1}^m \beta_j C_j^{(2)}=\ide_{M_m(E)},\label{eqac1}\\
& \Gamma^0_1=0,\\
& \Gamma^1_1(aa')=\Gamma^1_1(a)\Gamma^1_1(a')&&\text{for $a,a'\in A$,}\label{eqad1}\\
& \Gamma^1_0(aa')=\Gamma^1_0(a)\Gamma^0_0(a')+\Gamma^1_1(a)\Gamma^1_0(a')&&\text{for $a,a'\in A$,}\label{eqae1}\\
\shortintertext{and}
&\Gamma^1_1(1)=I_m, \quad \Gamma^1_0(1)=0.\label{eqag1}
\end{align}
\end{proposition}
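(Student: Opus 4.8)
The plan is to derive this statement from Lemma~\ref{extension01}, of which it is essentially a streamlined reformulation. By that lemma, under the standing hypothesis that $\Theta$ is a twisting map, $\psi$ is a twisting map if and only if conditions \eqref{primera}--\eqref{eqag} hold. Comparing the two lists, the matrix conditions \eqref{primera}--\eqref{eqac} are literally the conditions \eqref{primera1}--\eqref{eqac1}, the condition $C^{(1)}_j=0$ occurs in both, and \eqref{eqae} coincides with \eqref{eqae1}. Hence it suffices to show that, in the presence of $C^{(1)}_j=0$, the block formed by \eqref{eqad}, \eqref{eqaf} and \eqref{eqag} is equivalent to the block formed by $\Gamma^0_1=0$, \eqref{eqad1} and \eqref{eqag1}.

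The only step with genuine content is the observation that $C^{(1)}_j=0$ already forces $\Gamma^0_1=0$. Indeed, the $(i,j)$-entry of $C^{(1)}_k$ equals $\sum_{l=1}^n \lambda^i_{jl}\tilde\gamma^l_{k+n}$, and the maps $\tilde\gamma^l_{k+n}$ with $1\le l\le n$ are exactly the entries of the $k$-th column of the block $\Gamma^0_1$. Assuming $C^{(1)}_k=0$, I would multiply the relation $\sum_{l=1}^n \lambda^i_{jl}\tilde\gamma^l_{k+n}=0$ by $\alpha_j$, sum over the column index $j$, and invoke the unit identity $\sum_j \alpha_j\lambda^i_{jl}=\delta_{il}$ recorded in Subsection~\ref{aptoalgdimfin}; the left-hand side collapses to $\tilde\gamma^i_{k+n}$, giving $\tilde\gamma^i_{k+n}=0$ for all $1\le i\le n$ and $1\le k\le m$, that is $\Gamma^0_1=0$ (equivalently $\psi(A\ot C)\subseteq C\ot A$, by Remark~\ref{significado de Gamma^0_1=0}).

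With $\Gamma^0_1=0$ in hand the rest is bookkeeping. Substituting $\Gamma^0_1=0$ into \eqref{eqad} with $p=0$ and into \eqref{eqaf} turns both into the trivial identity $0=0$; \eqref{eqad} with $p=1$ collapses to $\Gamma^1_1(aa')=\Gamma^1_1(a)\Gamma^1_1(a')$, which is \eqref{eqad1}; and \eqref{eqag} reduces to \eqref{eqag1}, since $\Gamma^0_1(1)=0$ is now automatic. For the converse, the conditions of the present statement include $\Gamma^0_1=0$ outright, so reading these implications backwards recovers \eqref{eqad}, \eqref{eqaf} and \eqref{eqag}, while the matrix conditions and \eqref{eqae} transfer unchanged. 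This proves the two lists equivalent, and the Proposition follows. The whole argument hinges on the contraction in the previous paragraph; the remaining manipulations are routine, so the expected obstacle is merely to record it cleanly.
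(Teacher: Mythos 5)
Your proof is correct, and its skeleton---quoting Lemma~\ref{extension01} for the bulk of the conditions and reducing the whole statement to the single implication $C^{(1)}_j=0 \Rightarrow \Gamma^0_1=0$, with the manipulations of \eqref{eqad}, \eqref{eqaf} and \eqref{eqag} being routine---is the same as the paper's. The difference lies in how that one implication is established. The paper argues: modulo a basis change one may assume $b_1=1_B$, and then reads $\Gamma^0_1$ off the first column of the matrices $C^{(1)}_i$ via $\tilde{\gamma}^j_{i+n}=\sum_{l}\lambda^j_{1l}\tilde{\gamma}^l_{i+n}=\bigl(C^{(1)}_i\bigr)_{j1}=0$. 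You instead contract the entries of $C^{(1)}_k$ against the coordinates $\alpha_j$ of $1_B$ and invoke the unit identity $\sum_j\alpha_j\lambda^i_{jl}=\delta_{il}$ recorded in Subsection~\ref{aptoalgdimfin}. The two computations are close kin---the paper's is exactly your contraction specialized to $\alpha=(1,0,\dots,0)$---but your version has a real advantage: it works in the originally fixed basis, whereas the paper's normalization $b_1=1_B$ tacitly uses that the twisting-map property of $\psi$ and $\Theta$, the vanishing $C^{(1)}_j=0$, and the conclusion $\Gamma^0_1=0$ (equivalently $\psi(A\ot C)\subseteq C\ot A$, by Remark~\ref{significado de Gamma^0_1=0}) are all insensitive to a change of basis of $B$, a point the paper leaves implicit and whose systematic treatment only appears in the later change-of-basis section. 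So your argument is more self-contained, and it sidesteps the forward reference (and the small typographical slips) in the paper's own proof.
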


\begin{proof} $\Leftarrow$)\enspace By Lemma~\ref{extension01}, this is clear.

\smallskip

\noindent $\Rightarrow$) Again by Lemma~\ref{extension01} it suffices to check that $\Gamma^0_1=0$ or, equivalently, that $\psi(A\ot C)\subseteq C\ot A$. Module a basis change we can assume that $b_1=1_B$. In this case, since $C_i^{(1)}$ we have:
$$
\tilde{\gamma}_{i+n}^j =\sum_{l=1}^n \delta_{jl}\tilde{\gamma}_{i+n}^l =\sum_{l=1}^n \lambda_{1l}^j\tilde{\gamma}_{i+n}^l = \bigl(C_i^{(1)}\bigr)_{_{j1}}=0
$$
for $i\in \{1,\dots,n\}$ and $j\in \{1,\dots,m\}$, as desired.
\end{proof}

\begin{remark}\label{representacion extension 1}
For each $a\in A$, write
\begin{align*}
&\varphi_B(a):= \begin{pmatrix} \tilde{\gamma}^1_1(a) & \dots & \tilde{\gamma}^1_n(a) \\ \vdots & \ddots & \vdots \\ \tilde{\gamma}^n_1(a) & \dots & \tilde{\gamma}^n_n(a)  \end{pmatrix},\\[1.5\jot]
&\varphi_C(a):= \begin{pmatrix} \tilde{\gamma}^{n+1}_{n+1}(a) & \dots & \tilde{\gamma}^{n+1}_{n+m}(a) \\ \vdots & \ddots & \vdots \\ \tilde{\gamma}^{n+m}_{n+1}(a) & \dots & \tilde{\gamma}^{n+m}_{n+m}(a)  \end{pmatrix}
\shortintertext{and}
&\Delta(a):= \begin{pmatrix} \tilde{\gamma}^{n+1}_1(a) & \dots & \tilde{\gamma}^{n+1}_n(a) \\ \vdots & \ddots & \vdots \\ \tilde{\gamma}^{n+m}_1(a) & \dots & \tilde{\gamma}^{n+m}_n(a)  \end{pmatrix}.
\end{align*}
If the hypothesis of Proposition~\ref{teorema fundamental sobre extensiones} are satisfied, then
\begin{equation}\label{matrix extension 1}
\varphi_{\psi}(a)=\begin{pmatrix}
\varphi_B(a) & 0\\ \Delta(a) & \varphi_C(a)
\end{pmatrix}
\quad\text{for all $a\in A$.}
\end{equation}
Furthermore, since $\varphi_{\psi}$ is a representation, the following equalities hold
\begin{align*}
&\varphi_B(aa')=\varphi_B(a)\varphi_B(a'),\\
&\varphi_C(aa')=\varphi_C(a)\varphi_C(a')
\shortintertext{and}
&\Delta(aa')= \Delta(a)\varphi_B(a') + \varphi_C(a)\Delta(a'),
\end{align*}
for all $a,a'\in A$.
\end{remark}

\begin{corollary}\label{extension02} Suppose that the hypothesis of Proposition~\ref{extension01} are satisfied.
Let $i_C\colon C\to D$ and $p_C\colon D\to C$ be the canonical inclusion and the canonical surjection, respectively. If the map $\Upsilon:=(p_C\ot A)\circ\psi\circ (A\ot i_C)$ is a twisting map. Then $\psi$ is a twisting map if and only if $\psi=\Theta \oplus \Upsilon$
\end{corollary}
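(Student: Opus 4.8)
The plan is to reformulate the identity $\psi=\Theta\oplus\Upsilon$ in terms of the block structure of $\hat\phi_\psi$ and then to apply Proposition~\ref{teorema fundamental sobre extensiones} to each factor of $D$. The first point I would make is that, directly from $\Theta=(p_B\ot A)\circ\psi\circ(A\ot i_B)$ and $\Upsilon=(p_C\ot A)\circ\psi\circ(A\ot i_C)$, the diagonal blocks of $\hat\phi_\psi$ are $\Gamma^0_0=\hat\phi_\Theta$ and $\Gamma^1_1=\hat\phi_\Upsilon$, regardless of whether $\psi$ is a twisting map. Hence, using~\eqref{ecuaciongammatilde} and Remark~\ref{significado de Gamma^0_1=0}, the equality $\psi=\Theta\oplus\Upsilon$ is equivalent to the simultaneous vanishing of the off-diagonal blocks $\Gamma^0_1$ (i.e. $\psi(A\ot C)\subseteq C\ot A$) and $\Gamma^1_0$ (i.e. $\psi(A\ot B)\subseteq B\ot A$). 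So the corollary reduces to showing that $\psi$ is a twisting map if and only if $\Gamma^0_1=\Gamma^1_0=0$.

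For the direct implication I would assume $\psi$ is a twisting map. Proposition~\ref{teorema fundamental sobre extensiones}, whose hypothesis that $\Theta$ be a twisting map is part of our assumptions, already gives $\Gamma^0_1=0$. To obtain $\Gamma^1_0=0$ I would exploit the symmetry of the construction: rewriting $D=C\times B$ with the reordered basis $\{c_1,\dots,c_m,b_1,\dots,b_n\}$ interchanges the two diagonal blocks of $\hat\phi_\psi$ and swaps $\Gamma^0_1$ with $\Gamma^1_0$, while the map that now plays the role of $\Theta$ is exactly $\Upsilon$. Since $\Upsilon$ is a twisting map, Proposition~\ref{teorema fundamental sobre extensiones} applied to this reordered decomposition yields $\psi(A\ot B)\subseteq B\ot A$, that is $\Gamma^1_0=0$; combined with the previous step this gives $\psi=\Theta\oplus\Upsilon$.

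For the converse I would assume $\Gamma^0_1=\Gamma^1_0=0$ and verify conditions~\eqref{primera1}--\eqref{eqag1} of Proposition~\ref{teorema fundamental sobre extensiones}. The vanishing of $\Gamma^1_0$ forces $B^{(2)}_k=0$ for all $k$, so \eqref{primera1} and \eqref{eqab1} become trivial, \eqref{eqac1} collapses to $\sum_j\beta_j C^{(2)}_j=\ide_{M_m(E)}$, and \eqref{eqae1} together with $\Gamma^1_0(1)=0$ hold automatically. The surviving conditions \eqref{eqaa1}, \eqref{eqad1}, the reduced form of \eqref{eqac1}, and $\Gamma^1_1(1)=I_m$ are precisely the multiplicativity and unitality of $\hat\rho_\Upsilon$ and $\hat\phi_\Upsilon$, which hold because $C^{(2)}_k=\hat\rho_\Upsilon(c^{\op}_k)$ and $\Gamma^1_1=\hat\phi_\Upsilon$ while $\Upsilon$ is a twisting map (via Corollary~\ref{Coro condiciones 3 y 4} and Proposition~\ref{condiciones 1 y 2}). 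Hence $\psi$ is a twisting map.

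The main obstacle I anticipate is the symmetry step of the direct implication: one must check carefully that reordering the basis of $D=B\times C$ genuinely converts the hypothesis that $\Upsilon$ is a twisting map into the hypothesis of Proposition~\ref{teorema fundamental sobre extensiones}, and its conclusion $\psi(A\ot C)\subseteq C\ot A$ into the desired $\psi(A\ot B)\subseteq B\ot A$, since the auxiliary matrices $B^{(l)}_k$, $C^{(l)}_k$ and the blocks $\Gamma^p_q$ are all defined relative to the fixed ordering $\{b_1,\dots,b_n,c_1,\dots,c_m\}$ and are therefore phrased asymmetrically in $B$ and $C$.
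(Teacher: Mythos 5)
Your proposal is correct and follows essentially the same route as the paper: the paper's (very terse) proof is exactly to apply Proposition~\ref{teorema fundamental sobre extensiones} together with Remark~\ref{significado de Gamma^0_1=0} twice, with the roles of $B$ and $C$ interchanged, which is your symmetry argument for $\Gamma^0_1=\Gamma^1_0=0$, and the converse is the same verification of conditions~\eqref{primera1}--\eqref{eqag1} that you carry out (noting only that the two conditions you leave implicit, $C^{(1)}_j=0$ and $\Gamma^0_1=0$, follow at once from the assumed vanishing of $\Gamma^0_1$).
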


\begin{proof} Use Proposition~\ref{teorema fundamental sobre extensiones} and Remark~\ref{significado de Gamma^0_1=0} applied to $B$ and $C$.
\end{proof}

\section{Change of Basis}
The representations introduced in Section~\ref{The canonical representation} depend on the choice of the basis in $B$. In this section we analyze how the representations $\varphi_{\chi}$ and $\hat \rho_\chi$ behave under a base change.

Let $A$, $B$ and $C$ algebras over a field $K$ and let $f\colon B\to C$ be a morphism of algebras. Assume that $B$ and $C$ are finite dimensional and fix basis $\mathcal{B}=\{b_1,\dots,b_n\}$ and $\mathcal{C}=\{c_1,\dots,c_m\}$ of $B$ and $C$, respectively. Consider the matrix
$$
M_{\mathcal{B}}^{\mathcal{C}}(f):=\begin{pmatrix}
                    \zeta_1^1\cdot 1_A & \dots & \zeta_n^1\cdot 1_A\\
                    \vdots & \ddots & \vdots \\
                    \zeta_1^m\cdot 1_A & \dots & \zeta_n^m\cdot 1_A\\
   \end{pmatrix}
\in M_{m\times n}(A),
$$
where the scalars $\zeta_i^j\in K$ are determined by the equalities
$$
f(b_j)= \sum_{i=1}^m \zeta^i_j \cdot c_i.
$$

\begin{proposition}\label{morfismo inducido} Let $\chi\colon A\ot B \longrightarrow B\ot A$ and $\varpi \colon A\ot C \longrightarrow C\ot A$ be twisting maps. The map
$$
f\ot A\colon B\ot_{\chi} A \longrightarrow C\ot_{\varpi} A
$$
is a morphism of algebras if and only if
\begin{equation}\label{conparamorfusandomatr}
\varphi_{\varpi}(a) \  M_{\mathcal{B}}^{\mathcal{C}}(f)= M_{\mathcal{B}}^{\mathcal{C}}(f) \ \varphi_{\chi}(a)\quad\text{for all $a\in A$,}
\end{equation}
where $\varphi_{\varpi}$ and $\varphi_{\chi}$ are the representations defined in Corollary~\ref{la otra representacion matricial}.
\end{proposition}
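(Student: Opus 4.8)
The plan is to reduce the statement to the universal-property criterion from Remark~\ref{prop univ del Twisted tensor product} and then translate that functional-analytic identity into the matrix equation~\eqref{conparamorfusandomatr} by passing through the faithful representation $\varphi_{\chi}$ of Corollary~\ref{la otra representacion matricial}. The second part of Remark~\ref{prop univ del Twisted tensor product} (with $f$ in the role of $f$, and $A\to A$ the identity in the role of the other map) states precisely that $f\ot A$ is an algebra morphism if and only if the twisting-map compatibility $\varpi\circ(\ide_A\ot f)=(f\ot\ide_A)\circ\chi$ holds. So the entire content of the proposition is to show that this compatibility is equivalent to~\eqref{conparamorfusandomatr}.

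$\quad$First I would unwind both sides of the compatibility $\varpi\circ(A\ot f)=(f\ot A)\circ\chi$ on the elements $a\ot b_k$, using the defining formula~\eqref{eq def de los gamma ij} for the maps $\gamma_i^j$ attached to $\chi$ and the analogous maps attached to $\varpi$. Applying $f$ to the expansion $f(b_k)=\sum_i\zeta_k^i c_i$ and comparing the $c$-components of each side produces, for every $k$, a system of scalar-coefficient identities relating the $\gamma_k^j$ (for $\chi$) to the $\varpi$-analogues of $\gamma$ via the transition scalars $\zeta_j^i$. The key observation is that these identities are exactly the entrywise expansion of the single matrix equation $\varphi_{\varpi}(a)\,M_{\mathcal{B}}^{\mathcal{C}}(f)=M_{\mathcal{B}}^{\mathcal{C}}(f)\,\varphi_{\chi}(a)$, once one recalls from Corollary~\ref{la otra representacion matricial} that $\varphi_{\chi}(a)$ has $(j,k)$-entry $\gamma_k^j(a)$ and $M_{\mathcal{B}}^{\mathcal{C}}(f)$ has $(i,j)$-entry $\zeta_j^i\cdot 1_A$.

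$\quad$The cleanest way to organize the bookkeeping is to work directly with the representation $\varphi_{\chi}$ rather than with the twisting map: since $\varphi_{\chi}$ is faithful and $f\ot A$ is determined on the generators $b_k\ot 1$ and $1\ot a$, I would check that $f\ot A$ respects multiplication if and only if the induced map of representations intertwines $\varphi_{\chi}$ and $\varphi_{\varpi}$ through $M_{\mathcal{B}}^{\mathcal{C}}(f)$. Because $f$ is already an algebra map of $B$ into $C$, the compatibility of $f\ot A$ with the $B$-part is automatic (this is encoded in the structure-constant blocks of $\varphi_{\chi}(b_k)$ and $\varphi_{\varpi}(c_i)$ together with $f(b_j b_k)=f(b_j)f(b_k)$), so the whole condition collapses to the $A$-part, namely~\eqref{conparamorfusandomatr}.

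$\quad$\textbf{The main obstacle} will be verifying that the $B$-side compatibility is genuinely automatic and does not impose a further constraint beyond~\eqref{conparamorfusandomatr}: concretely, one must check that the matrix identity $\varphi_{\varpi}(c_i)\,M_{\mathcal{B}}^{\mathcal{C}}(f)=M_{\mathcal{B}}^{\mathcal{C}}(f)\,\varphi_{\chi}(b_k)$ follows purely from $f$ being multiplicative and from the associativity relations among the structure constants $\lambda_{ij}^k$ and those of $C$. This is a finite linear-algebra computation that amounts to re-deriving $f(b_j)f(b_k)=f(b_jb_k)$ at the level of structure matrices, and while routine, it is the step where one must be careful that the two one-sided conditions of the universal property do not secretly decouple into two independent equations.
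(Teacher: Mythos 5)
Your proposal is correct, and its first two paragraphs are essentially the paper's own proof: invoke Remark~\ref{prop univ del Twisted tensor product} to reduce the statement to the compatibility $\varpi\circ(A\ot f)=(f\ot A)\circ\chi$, evaluate both sides on $a\ot b_k$, and compare coefficients of the $c_j$ to obtain $\sum_{i}\zeta_i^j\,\gamma_k^i(a)=\sum_{u}\zeta_k^u\,\tilde{\gamma}_u^j(a)$, which is exactly the $(j,k)$-entry of \eqref{conparamorfusandomatr}. Your third and fourth paragraphs then propose a different bookkeeping --- requiring that $M_{\mathcal{B}}^{\mathcal{C}}(f)$ intertwine the faithful representations, i.e. $\varphi_{\varpi}\bigl((f\ot A)(x)\bigr)\,M_{\mathcal{B}}^{\mathcal{C}}(f)=M_{\mathcal{B}}^{\mathcal{C}}(f)\,\varphi_{\chi}(x)$ for all $x\in B\ot_{\chi}A$ --- and this variant also works, but two remarks are in order. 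First, the ``obstacle'' you flag exists only in this variant, and it is easily discharged: the $B$-side identity to verify is $\varphi_{\varpi}(f(b_k))\,M_{\mathcal{B}}^{\mathcal{C}}(f)=M_{\mathcal{B}}^{\mathcal{C}}(f)\,\varphi_{\chi}(b_k)$ (note it involves $\varphi_{\varpi}(f(b_k))$, not $\varphi_{\varpi}(c_i)$ as you wrote), and entrywise this is literally the multiplicativity $f(b_kb_j)=f(b_k)f(b_j)$ written in structure constants, hence automatic; combined with \eqref{conparamorfusandomatr} and the multiplicativity of $\varphi_{\chi}$ and $\varphi_{\varpi}$, it gives the intertwining on all of $B\ot_{\chi}A$, so the two conditions do decouple harmlessly. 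Second, the paper's route makes this issue vanish altogether: the criterion of Remark~\ref{prop univ del Twisted tensor product} already presupposes that $f$ is an algebra morphism, so the only condition it produces is the twisting compatibility, which unwinds directly to \eqref{conparamorfusandomatr} with no separate $B$-side equation to check.
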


\begin{proof} Let $\gamma_i^j \colon A\to A$ ($1\le i,j \le n$) and $\tilde{\gamma}_i^j \colon A\to A$ ($1\le i,j \le m$) be the maps determined by the equality~\eqref{eq def de los gamma ij} applied to $\chi$ and $\varpi$, respectively.  A direct computation using Remark~\ref{prop univ del Twisted tensor product} shows that $f\ot A$ is a morphism if and only if
\begin{equation}\label{con los gammas}
\sum_{i=1}^n \zeta_i^j \cdot \gamma_k^i(a) = \sum_{u=1}^m \zeta_k^u\cdot \tilde{\gamma}_u^j(a)\quad\text{for all $a\in A$.}
\end{equation}
But this happens if and only if condition~\eqref{conparamorfusandomatr} is fulfilled.
\end{proof}
\begin{remark}\label{rho hat por morfismos}
Let
$$
\widehat{M}_{\mathcal{B}}^{\mathcal{C}}(f):=\begin{pmatrix}
                    \zeta_1^1\cdot \ide & \dots & \zeta_n^1\cdot \ide\\
                    \vdots & \ddots & \vdots \\
                    \zeta_1^m\cdot \ide & \dots & \zeta_n^m\cdot \ide\\
   \end{pmatrix}
\in M_{m\times n}(E).
$$
A direct computation using~\eqref{conparamorfusandomatr} shows that under
the hypothesis of Proposition~\ref{morfismo inducido}, we have
$$
\hat{\rho}_{\varpi}(f(b)^{op})\ \widehat{M}_{\mathcal{B}}^{\mathcal{C}}(f) = \widehat
M_{\mathcal{B}}^{\mathcal{C}}(f) \ \hat{\rho}_{\chi}(b^{op}),
$$
for all $b\in B$.
\end{remark}

\begin{corollary}\label{cambio de base} Let $\mathcal{B}':=\{b'_1,\dots,b'_n\}$ be another basis of $B$ and let $\varphi'_{\chi}$ and $\hat{\rho}_{\chi}'$ be
the representations associated to $\chi$ according Corollaries~\ref{Coro condiciones 3 y 4} and~\ref{la otra representacion matricial}, but using the basis
$\mathcal{B}'$ instead of $\mathcal{B}$. Let $M:=M_{\mathcal{B}}^{\mathcal{B}'}(\ide_B)$ and
$\widehat M:=\widehat M_{\mathcal{B}}^{\mathcal{B}'}(\ide_B)$. Then
$$
\varphi'_{\chi}(a)= M \  \varphi_{\chi}(a)\   M^{-1}\quad\text{and}\quad
\hat{\rho}_{\chi}'(b^{op}) = \widehat
M \ \hat{\rho}_{\chi}(b^{op})\ \widehat{M}^{-1},\text{for all $a\in A$ and $b\in B$.}
$$
\end{corollary}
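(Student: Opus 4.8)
The plan is to derive both identities from Proposition~\ref{morfismo inducido} and Remark~\ref{rho hat por morfismos}, applied to the identity morphism of $B$ but with the source and target copies of $B$ carrying different bases. Explicitly, I would specialize those results to $C:=B$, $\varpi:=\chi$, $\mathcal{C}:=\mathcal{B}'$ and $f:=\ide_B$. With these choices $M_{\mathcal{B}}^{\mathcal{C}}(f)$ is exactly $M=M_{\mathcal{B}}^{\mathcal{B}'}(\ide_B)$ and $\widehat M_{\mathcal{B}}^{\mathcal{C}}(f)$ is $\widehat M$; moreover the representation attached to the source basis $\mathcal{B}$ is $\varphi_\chi$ (resp.\ $\hat\rho_\chi$), while the one attached to the target basis $\mathcal{C}=\mathcal{B}'$ is precisely $\varphi'_\chi$ (resp.\ $\hat\rho'_\chi$), since these primed representations are by definition built from $\chi$ using $\mathcal{B}'$.

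Now the hypotheses of Proposition~\ref{morfismo inducido} hold trivially: $\ide_B$ is a morphism of algebras and $\ide_B\ot A$ is the identity of $B\ot_\chi A$, hence a morphism. Thus the equivalent condition~\eqref{conparamorfusandomatr} is satisfied, which reads $\varphi'_\chi(a)\,M=M\,\varphi_\chi(a)$ for all $a\in A$; and Remark~\ref{rho hat por morfismos}, valid under the same hypotheses, gives $\hat\rho'_\chi(b^{op})\,\widehat M=\widehat M\,\hat\rho_\chi(b^{op})$ for all $b\in B$, using $\ide_B(b)=b$. It remains only to invert $M$ and $\widehat M$. The scalar matrix $(\zeta_i^j)\in M_n(K)$ is the change-of-basis matrix of $\ide_B$ from $\mathcal{B}$ to $\mathcal{B}'$, hence invertible over $K$; applying the unital ring homomorphisms $M_n(K)\to M_n(A)$ and $M_n(K)\to M_n(E)$ induced by $\lambda\mapsto\lambda\cdot 1_A$ and $\lambda\mapsto\lambda\cdot\ide$ sends $(\zeta_i^j)$ to $M$ and $\widehat M$, so both are invertible, with inverses the images of $(\zeta_i^j)^{-1}$.

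Multiplying the first relation on the right by $M^{-1}$ and the second on the right by $\widehat M^{-1}$ then yields the desired formulas $\varphi'_\chi(a)=M\,\varphi_\chi(a)\,M^{-1}$ and $\hat\rho'_\chi(b^{op})=\widehat M\,\hat\rho_\chi(b^{op})\,\widehat M^{-1}$. The only delicate point is the bookkeeping in the first step: one must keep straight that the source basis indexes the unprimed representation and the target basis the primed one, so that $M$ (and not $M^{-1}$) lands on the correct side of~\eqref{conparamorfusandomatr}; once this identification is fixed the rest is immediate.
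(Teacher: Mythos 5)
Your proposal is correct and follows exactly the paper's route: the paper's proof is precisely a specialization of Proposition~\ref{morfismo inducido} and Remark~\ref{rho hat por morfismos} to $f=\ide_B$ with source basis $\mathcal{B}$ and target basis $\mathcal{B}'$, followed by inverting $M$ and $\widehat M$. Your write-up just makes explicit the bookkeeping (which basis carries the primed representation) and the invertibility of $M$ and $\widehat{M}$ that the paper leaves as ``a straightforward computation.''
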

\begin{proof}
A straightforward computation using Proposition~\ref{conparamorfusandomatr} and
Remark~\ref{rho hat por morfismos}.
\end{proof}

\section{Examples}

\subsection{Non-commutative duplicates of finite sets}
Consider the algebra $B=\frac{K[X]}{\langle X^2-X\rangle}$ with the basis $\mathcal{B}=\{1, X\}$. The structure matrices of $1$ and $X$ with respect to $(B,\mathcal{B})$ (see Definition~\ref{matrices de estructura}) are the matrices
$$
[X]_{\mathcal{B}}=\begin{pmatrix} 0 & 0\\ 1 & 1\end{pmatrix} \quad \text{and}\quad [1]_{\mathcal{B}}=\begin{pmatrix} 1 & 0\\ 0 & 1\end{pmatrix}.
$$

Consider a map $\chi\colon A\ot B \longrightarrow B\ot A$. Proceeding as in Subsection~1.2 we determine maps $\gamma_2^1\colon A\to A$ and $\gamma_2^2\colon A\to A$ such that
$$
\chi(a\ot X)= 1\ot\gamma^1_2(a)+X\ot\gamma^2_2(a).
$$

By Corollary~\ref{Coro condiciones 3 y 4} and Proposition~\ref{condiciones 1 y 2} we know that $\chi$ is a twisting map if and only if the maps $\hat{\rho}_{\chi}\colon B^{\op}\longrightarrow M_n(E)$ and $\hat{\phi}_{\chi}\colon A \longrightarrow M_n(A)$ are matrix representations. The associated twisted tensor products $A\ot_{\chi} B$ were studied in \cite{Ci}, where they are called {\em Non-Commutative Duplicates} (Actually they studied twisted tensor products $B\ot_{\chi} A$, which gives the same results, taking the opposite algebras).

The map $\hat{\rho}_B\colon B^{\op} \longrightarrow M_{2}(E)$ is given by the matrices
$$
\hat{\rho}_B(X)=\begin{pmatrix} \gamma^1_2& 0\\ \gamma^2_2 & \gamma^1_2+\gamma^2_2 \end{pmatrix} \quad \text{and}\quad \hat{\rho}_B(1)=\begin{pmatrix} \ide & 0\\ 0 & \ide \end{pmatrix}.
$$
The equalities $\hat{\rho}_B(X)=\hat{\rho}_B(X^2)=\hat{\rho}_B(X)^2$ determine the conditions
\begin{enumerate}

\smallskip

\item $\gamma^1_2\circ\gamma^1_2=\gamma^1_2$,

\smallskip

\item $\gamma^2_2\circ\gamma^1_2+\gamma^1_2\circ\gamma^2_2+\gamma^2_2\circ \gamma^2_2=\gamma^2_2$,

\smallskip

\item $(\gamma^1_2+\gamma^2_2)^2=\gamma^1_2+\gamma^2_2$,

\smallskip

\end{enumerate}
that the maps $\gamma^1_2,\gamma^2_2$ must satisfy in order that~$\hat{\rho}_B$ be a representation.

On the other hand the map $\hat{\phi}_{\chi}$ is given by the matrices
$$
\hat{\phi}_{\chi}(a):= \begin{pmatrix}
a&\gamma_2^1(a)\\
0 &\gamma^2_2(a)
\end{pmatrix} \quad\text{  for $a\in A$.}
$$
A direct computation shows that $\hat{\phi}_{\chi}$ is a representation if and only if the maps $\gamma^1_2$ and $\gamma^2_2$ satisfy
\begin{enumerate}[resume]

\smallskip

\item $\gamma^1_2(1)=0$,

\smallskip

\item $\gamma^2_2(1)=1$,

\smallskip

\item $\gamma^1_2(ab)=a\gamma_2^1(b)+\gamma^1_2(a)\gamma^2_2(b)$,

\smallskip

\item $\gamma^2_2(ab)=\gamma^2_2(a)\gamma^2_2(b)$.

\smallskip

\end{enumerate}
It is easy to to see that (1) and (2) imply (3), while (5) and (6) imply (4). Therefore, the above conditions are satisfied if and only if $f:=\gamma_2^2$ is an endomorphism of $A$ and $\delta:=\gamma_1^2$ is an $(\ide,f)$-derivation satisfying $f=f^2+\delta\circ f +f\circ \delta$ (Compare with \cite{Ci}*{Definition 2.7}).

Finally, the representation $\varphi_{\chi}$ in Corollary~\ref{la otra representacion matricial} is given by the matrices
$$
\varphi_{\chi}(a):= \begin{pmatrix}
a&\gamma_2^1(a)\\
0 &\gamma^2_2(a)
\end{pmatrix} \quad\text{for $a\in A$,}\quad
\varphi_{\chi}(X)=\begin{pmatrix} 0 & 0\\ 1 & 1\end{pmatrix} \quad \text{and}\quad \varphi_{\chi}(1)=\begin{pmatrix} 1 & 0\\ 0 & 1\end{pmatrix}.
$$

\subsection{Factorizations structures with a two-dimensional factor}
Given a polynomial $P(X):= X^2-\alpha X+ \beta\in K[X]$ consider the algebra $B=\frac{K[X]}{\langle P(X)\rangle}$ with the basis $\mathcal{B}=\{1, X\}$. The structure matrices of $1$ and $X$ with respect to $(B,\mathcal{B})$ are the matrices
$$
[X]_{\mathcal{B}}=\begin{pmatrix} 0 & -\beta\\ 1 & \alpha\end{pmatrix} \quad \text{and}\quad [1]_{\mathcal{B}}=\begin{pmatrix} 1 & 0\\ 0 & 1\end{pmatrix}.
$$

Consider a map $\chi\colon A\ot B \longrightarrow B\ot A$. Proceeding as above we determine maps $\gamma_2^1\colon A\to A$ and $\gamma_2^2\colon A\to A$ such that
$$
\chi(a\ot X)= 1\ot\gamma^1_2(a)+X\ot\gamma^2_2(a).
$$

As above, $\chi$ is a twisting map if and only if the maps $\hat{\rho}_{\chi}\colon B^{\op}\longrightarrow M_n(E)$ and $\hat{\phi}_{\chi}\colon A \longrightarrow M_n(A)$ are matrix representations. The associated twisted tensor products $A\ot_{\chi} B$ (respectively $B\ot_{\chi} A$) were studied in \cite{LNC}, where they are called {\em quantum duplicates}. The maps $f$ and $\delta$ considered in that paper correspond to the maps $\gamma_2^2$ and $\gamma_2^1$, respectively.

The map $\hat{\rho}_B\colon B^{\op} \longrightarrow M_{2}(E)$ is given by the matrices
$$
\hat{\rho}_B(X)=\begin{pmatrix} \gamma^1_2& -\beta \gamma_2^2\\ \gamma^2_2 & \gamma^1_2+\alpha \gamma^2_2 \end{pmatrix} \quad \text{and}\quad \hat{\rho}_B(1)=\begin{pmatrix} \ide & 0\\ 0 & \ide \end{pmatrix}.
$$
The equalities $\hat{\rho}_B(X)^2=\hat{\rho}_B(X^2)=\alpha\hat{\rho}_B(X)- \beta \hat{\rho}_B(1) $ determine the conditions
\begin{enumerate}

\smallskip

\item $\gamma^1_2\circ\gamma^1_2 - \beta \gamma^2_2\circ\gamma^2_2=\alpha\gamma^1_2-\beta\ide$,

\smallskip

\item $\gamma^1_2\circ\gamma^2_2+\gamma^2_2\circ\gamma^1_2+\alpha\gamma^2_2\circ\gamma^2_2=\alpha\gamma^2_2$,

\smallskip

\end{enumerate}
that the maps $\gamma^1_2,\gamma^2_2$ must satisfy in order that~$\hat{\rho}_B$ is a representation.

On the other hand the map $\hat{\phi}_{\chi}$ is given by the matrices
$$
\hat{\phi}_{\chi}(a)= \begin{pmatrix}
                            a &\gamma_2^1(a)\\
                            0 &\gamma^2_2(a)
                   \end{pmatrix}\quad\text{for $a\in A$.}
$$
A direct computation shows that $\hat{\phi}_{\chi}$ is a representation if and only if the maps $\gamma^1_2$ and $\gamma^2_2$ satisfy
\begin{enumerate}[resume]

\smallskip

\item $\gamma^2_2(1)=1$,

\smallskip

\item $\gamma^1_2(ab)=a\gamma_2^1(b)+\gamma^1_2(a)\gamma^2_2(b)$,

\smallskip

\item $\gamma^2_2(ab)=\gamma^2_2(a)\gamma^2_2(b)$.

\smallskip

\end{enumerate}
Therefore, the above conditions are satisfied if and only if $f:=\gamma_2^2$ is an endomorphism of $A$ and $\delta:=\gamma_2^1$ is an $(\ide,f)$-derivation satisfying
$$
P(\delta)= \beta f^2 \quad \text{and}\quad f\circ \delta + \delta\circ f = \alpha(f - f^2)
$$
(Compare with \cite{LNC}*{Lemma~1.1}).

Finally the representation $\varphi_{\chi}$ in Corollary~\ref{la otra representacion matricial} is given by the matrices
$$
\varphi_{\chi}(a)= \begin{pmatrix}
                            a &\gamma_2^1(a)\\
                            0 &\gamma^2_2(a)
                   \end{pmatrix}\quad\text{for $a\in A$,}
\quad
\varphi_{\chi}(X)=\begin{pmatrix}
                            0 & -\beta\\
                            1 & \alpha
                  \end{pmatrix}
\quad \text{and}\quad
\varphi_{\chi}(1)=\begin{pmatrix}
                            1 & 0\\
                            0 & 1
                  \end{pmatrix}.
$$

\subsection{Twisting with $K^n$}\label{casoespanholes}
Let $B:=K^n$ and let $\mathcal{B}_2=\{e_1,\dots,e_n\}$ be the canonical basis of $B$. The structure matrices of $e_1,\dots,e_n$
with respect to $(B,\mathcal{B})$ are the matrices $e_{11},\dots, e_{nn}$, where $e_{ii}$ is the matrix with $1$ in the $i$-th entry of the main diagonal and $0$ in the other entries.
Given a twisted tensor product $\chi\colon B\ot A\longrightarrow A\ot B$ we have maps  $\tilde{\gamma}_i^j\colon A\longrightarrow A$ ($1\le i,j \le n$) defined by the equalities
$$
\chi(a\ot e_i)= \sum_{j=1}^ne_1\ot \tilde{\gamma}_i^j(a).
 $$

As above, $\chi$ is a twisting map if and only if the maps $\hat{\rho}_{\chi}\colon B^{\op}\longrightarrow M_n(E)$ and $\hat{\phi}_{\chi}\colon A \longrightarrow M_n(A)$ are matrix representations. The associated twisted tensor products $A\ot_{\chi} B$ were studied in \cite{JLNS}. The maps $E_{ji}$ considered in that paper correspond to the maps $\tilde{\gamma}_j^i$.

The map $\hat{\rho}_B\colon B^{\op} \longrightarrow M_n(E)$ is given by the matrices
$$
\hat{\rho}_B(e_i)=\begin{pmatrix} \tilde{\gamma}^1_i &0  & \dots & 0\\
                                  0 & \tilde{\gamma}^2_i & \dots & 0\\
                                  \vdots & \vdots & \ddots & \vdots \\
                                  0 & 0 & \dots &\tilde{\gamma}^n_i
                                  \end{pmatrix}.
$$
The equalities
$$
\hat{\rho}_B(e_i)^2=\hat{\rho}_B(e_i),\quad\hat{\rho}_B(e_i)\hat{\rho}_B(e_j)=0 \quad \text{and}\quad \hat{\rho}_B(e_1) + \cdots +\hat{\rho}_B(e_n)=\ide
$$
determine the conditions
\begin{enumerate}

\smallskip

\item $\tilde{\gamma}_i^p\circ \tilde{\gamma}_j^p= \delta_{ij}\tilde{\gamma}_i^p$ ($1\le i,j,p\le n$),

\smallskip

\smallskip

\item $\sum_{j=1}^n\tilde{\gamma}^i_j=\ide_A$ for $i=1,\dots,n$,

\smallskip

\end{enumerate}
that the maps $\tilde{\gamma}^p_{ij}$ must satisfy in order that~$\hat{\rho}_B$ is a representation.

On the other hand the map $\hat{\phi}_{\chi}$ is given by the matrices
$$
\hat{\phi}_{\chi}(a)= \begin{pmatrix}
                            \widetilde{\gamma}_1^1(a) & \dots &\widetilde{\gamma}_n^1(a)\\
                            \vdots & \ddots & \vdots \\
                            \widetilde{\gamma}_n^1(a)& \dots &\widetilde{\gamma}^n_n(a)
                   \end{pmatrix} \quad\text{for $a\in A$.}
$$
A direct computation shows that $\hat{\phi}_{\chi}$ is a representation if and only if the maps $\tilde{\gamma}^i_j$ satisfy
\begin{enumerate}[resume]

\smallskip

\item $\tilde{\gamma}^i_j(ab)= \sum_{p=1}^{n}\tilde{\gamma}^i_p(a)\tilde{\gamma}^p_j(b)$ for $i,j=1,\dots,n$,

\smallskip

\item $\tilde{\gamma}^i_j(1_A)=\delta_{ij}1_A$ for $i,j=1,\dots,n$.

\smallskip

\end{enumerate}
Conditions~(1)--(4) correspond to the conditions~(6)--(9) in~\cite{JLNS}.

When $n=2$, we have the isomorphism $\frac{k[X]}{\langle X^2-X \rangle}\simeq K^2$ which sends $X$ to $e_2$. Using Proposition~\ref{morfismo inducido} is straightforward to check that the maps $f$ and $\delta$ in the first example satisfy
$$
f= \tilde{\gamma}^2_2 - \tilde{\gamma}^1_2\quad\text{and}\quad \delta= \tilde{\gamma}^1_2.
$$

The matrix representation $\hat{\phi}_{\chi}$ can be used to define a pair $(Q_{\chi},\mathcal{R}_{\chi})$ where $Q_{\chi}$ is a quiver and
$\mathcal{R}_{\chi}$ is a representation of the quiver $Q_{\chi}$, as follows
\begin{definition}
The Quiver $Q_{\chi}$ is defined as follows:
the set of vertices of $\Gamma_{\chi}$ is $\{v_1,\hdots, v_n \}$. The vertices $ v_i $ and $  v_j$ are joined by an arrow with the source $ v_j $  if and only if $ \gamma_j^i\neq 0$
\end{definition}
\begin{definition}
 Let $Q_{\chi}$be the quiver associated to $\hat{\phi}_{\chi}$. The representation $\mathcal{R}_{\chi}$ of $Q_{\chi}$
 is defined by the family of vector spaces $\{V_i\}_{i\in Q^0_{\chi}}$ and the family of $K$-linear maps $\{f_{\alpha}\}_{\alpha\in Q^1_{\chi}}$, where
  $$
 V_i=A,\quad f_{\alpha}=\gamma^i_j\colon V_j\to V_i,\quad j=s(\alpha),i=t(\alpha)
 $$
\end{definition}
\begin{remark}
 The map $\chi$ is a twisting map if and only if  the pair $(Q_{\chi},\mathcal{R}_{\chi})$ es admissible of order $n$ ( see \cite{JLNS}*{Definition~1.7})
\begin{enumerate}
  \item The conditions~(1)--(2) correspond to splitted condition ( see \cite{JLNS}*{Proposition~1.6})
  \item The condition~(3) corresponds to unital condition ( see \cite{JLNS}*{Proposition~1.6})
  \item The condition~(4) corresponds to factorizable condition ( see \cite{JLNS}*{Proposition~1.6})
\end{enumerate}
\end{remark}

\subsection{Extensions of $K^m$ to $K^{n}$}
Consider a twisting map $\chi\colon A\ot K^n \longrightarrow K^n\ot A$ defined by $\chi(a\ot e_j)=\sum_{i=1}^n e_i\ot\tilde{\gamma}_j^i(a)$ and suppose that exists $m<n$ such that the map
$\Theta\colon A\ot K^{m} \longrightarrow K^{m}\ot A$ defined by $\Theta(a\ot e_j)=\sum_{i=1}^me_i\ot\tilde{\gamma}_j^i(a)$ for $j\in\{1,\hdots,m\}$ is a twisting map.
Write
$$
C^{(1)}_j=\hat{\rho}_{B}(e_j)=\begin{pmatrix} \tilde{\gamma}^1_j &0  & \dots & 0\\
                                  0 & \tilde{\gamma}^2_j & \dots & 0\\
                                  \vdots & \vdots & \ddots & \vdots \\
                                  0 & 0 & \dots &\tilde{\gamma}^m_j
                                  \end{pmatrix},
\quad \text{for} \quad j\in\{m+1,\hdots,n\}.
$$
Then, by proposition~\ref{extension01}, we have $C^{(1)}_j=0$ for $j\in\{m+1,\hdots,n\}$, consequently $\tilde{\gamma}^i_{j}=0$ for $i\in\{1,\hdots,m\}$ and $j\in\{m+1,\hdots,n\}$.
Therefore, the representation $\hat{\phi}_{\chi}$ looks as follows
\begin{equation}\label{ejemplo extension}
\hat{\phi}_{\chi}(a)=\begin{pmatrix}
\hat{\phi}_{\Theta}(a) & 0\\ \Delta(a) & \hat{\phi}_{K^{n-m}}(a)
\end{pmatrix}
\quad\text{para todo $a\in A$,}
\end{equation}
where
$$
\Delta (a) = \begin{pmatrix}
                            \widetilde{\gamma}_1^{m+1}(a) & \dots &\widetilde{\gamma}_m^{m+1}(a)\\
                            \vdots & \ddots & \vdots \\
                            \widetilde{\gamma}_1^n(a)& \dots &\widetilde{\gamma}_m^n(a)
                   \end{pmatrix} \quad%
                   \text{and} \quad
\varphi_{K^{n-m}}(a) = \begin{pmatrix}
                            \widetilde{\gamma}_{m+1}^{m+1}(a) & \dots &\widetilde{\gamma}_n^{m+1}(a)\\
                            \vdots & \ddots & \vdots \\
                            \widetilde{\gamma}_{m+1}^n(a)& \dots &\widetilde{\gamma}_n^n(a)
                   \end{pmatrix},%
$$
for $a\in A$.\\
In particular, the twisted tensor products that come from a Quiver of rank 1 with a cycle of length 2 studied in \cite{JLNS}*{Theorem~4.6}
are extensions of a twisted tensor product generated by the twisting map $\Theta\colon A\ot K^{2} \longrightarrow K^{2}\ot A$.
\newline
On the other hand, when the map $\Upsilon\colon A\ot K^{n-m} \longrightarrow K^{n-m}\ot A$ defined by
$\Upsilon(a\ot e_j)=\sum_{i=m+1}^ne_i\ot\tilde{\gamma}_j^i(a)$ for $j\in\{m+1,\hdots,n\}$ is a twisting map.
Write
$$
B^{(2)}_j=\hat{\rho}_{B}(e_j)=\begin{pmatrix} \tilde{\gamma}^{m+1}_j &0  & \dots & 0\\
                                  0 & \tilde{\gamma}^{m+2}_j & \dots & 0\\
                                  \vdots & \vdots & \ddots & \vdots \\
                                  0 & 0 & \dots &\tilde{\gamma}^n_j
                                  \end{pmatrix},
\quad \text{for} \quad j\in\{1,\hdots,m\}.
$$
Then, by corollary~\ref{extension02}, we have $B^{(2)}_j=0$ for $j\in\{1,\hdots,m\}$, consequently $\tilde{\gamma}^i_{j}=0$ for $i\in\{m+1,\hdots,n\}$ and $j\in\{1,\hdots,m\}$.
Therefore, the representation $\varphi_{\chi}$ looks as follows
\begin{equation}\label{ejemplo extension2}
\varphi_{\chi}(a)=\begin{pmatrix}
\varphi_{\Theta}(a) & 0\\ 0& \varphi_{\Upsilon}(a)
\end{pmatrix}
\quad\text{para todo $a\in A$,}
\end{equation}
The twisted tensor products for which $m=2$ and $\Upsilon$ come from a Quiver of rank 1 without cycles of length 2 studied in
\cite{JLNS}*{Theorem~4.6} are extensions of a twisted tensor product generated by the twisting map $\Theta\colon A\ot K^{2} \longrightarrow K^{2}\ot A$.
By example, the twisting map for which $\varphi_{\Upsilon}=\ide$ where studied in \cite{JLNS}*{Corollary~4.3}.

Now we construct extensions from $K^2$ to $K^3$. Consider a twisting map $\chi\colon K^3\ot K^3 \longrightarrow K^3\ot K^3$ defined by
$$
\chi(a\ot e_j)=e_1\ot\tilde{\gamma}_j^1(a)+e_2\ot\tilde{\gamma}_j^2(a)+e_3\ot\tilde{\gamma}_j^3(a)\quad j\in\{1,2,3\}
$$
and suppose that the map
$\Theta\colon K^3\ot K^2 \longrightarrow K^2\ot K^3$ defined by
$$\Theta(a\ot e_j)=e_1\ot\tilde{\gamma}_j^1(a)+e_2\ot\tilde{\gamma}_j^2(a)\quad j\in\{1,2\}$$
is a twisting map. Then, by Proposition \ref{teorema fundamental sobre extensiones}, we know that 
$\tilde{\gamma}_3^1=\tilde{\gamma}_3^2=0$. 

\subsection{Noncommutative truncated polynomial extensions}
Let $B:=\frac{K[Y]}{\langle Y^n \rangle}$ with the basis $\mathcal{B}=\{1,Y,Y^2,\hdots,Y^{n-1}\}$.
The structure matrices of $1$ and  $Y$ with respect to $(B,\mathcal{B})$ are the matrices
$$
[Y]_{\mathcal{B}}=
\begin{pmatrix}
0& 0&\hdots&0&0\\
1& 0&\hdots&0&0\\
0& 1&\hdots&0&0\\
\vdots &\vdots &\ddots&\vdots&\vdots\\
0& 0&\hdots&1&0\\
\end{pmatrix}
 \quad \text{and}\quad
[1]_{\mathcal{B}}=
\begin{pmatrix}
1& 0&0&\hdots&0\\
0& 1&0&\hdots&0\\
0& 0&1&\hdots&0\\
\vdots &\vdots &\vdots&\ddots&\vdots\\
0& 0&0&\hdots&1\\
\end{pmatrix}.
$$

Given a map $\chi\colon B\ot A\longrightarrow A\ot B$ we have unique maps  $\tilde{\gamma}_i^j\colon A\longrightarrow A$ ($0\le i,j \le n-1$) such that
$$
\chi(a\ot Y^i)= \sum_{j=0}^{n-1}Y^j\ot \tilde{\gamma}_i^j(a).
$$

We know that $\chi$ is a twisting map if and only if $\hat{\rho}_{\chi}\colon B^{\op}\longrightarrow M_n(E)$ and $\hat{\phi}_{\chi}\colon A \longrightarrow M_n(A)$ are matrix representations. The associated twisted tensor products $A\ot_{\chi} B$ were studied in \cite{GGV}. The maps $\gamma_i^j\colon A^{\op} \longrightarrow A^{\op}$ considered in that paper are related to the maps $\tilde{\gamma}_j^i\colon A\to A$ by the equalities $\gamma_i^j(a^{\op})= \tilde{\gamma}_j^i(a)$ for $0\le i,j < n$ and all $a\in A$.

The map $\hat{\rho}_B\colon B^{\op} \longrightarrow M_n(E)$ is given by the matrices
$$
\hat{\rho}_B(Y^i)=\begin{pmatrix}
\tilde{\gamma}^0_i& 0&0&\hdots&0\\
\tilde{\gamma}^1_i & \tilde{\gamma}^0_i&0&\hdots&0\\
\tilde{\gamma}^2_i & \tilde{\gamma}^1_i&\tilde{\gamma}^0_i&\hdots&0\\
\vdots &\vdots &\vdots&\ddots&\vdots\\
\tilde{\gamma}^{n-1}_i & \tilde{\gamma}^{n-2}_i&\tilde{\gamma}^{n-3}_i&\hdots&\tilde{\gamma}^0_i
\end{pmatrix}.
$$
The equalities
$$
\hat{\rho}_B(Y^r)=\hat{\rho}_B(Y^{r-i})\hat{\rho}_B(Y^i),\quad\hat{\rho}_B(Y^n)=0\quad \text{and}\quad \hat{\rho}_B(1)=\ide
$$
determine the conditions
\begin{enumerate}

\smallskip

\item $\tilde{\gamma}_0^j= \delta_{0j}\ide$,

\smallskip

\item $\tilde{\gamma}^j_r=\sum_{l=0}^j\tilde{\gamma}^{j-l}_{r-i}\circ\tilde{\gamma}^l_i$ for $j<n$, $1<r<n$ and $0<i<r$,

\smallskip

\item $\sum_{l=0}^j\tilde{\gamma}^{j-l}_{n-i}\circ\tilde{\gamma}^l_i=0$ for $j<n$ and $0<i<n$,

\end{enumerate}
that the maps $\tilde{\gamma}^p_{ij}$ must satisfy in order that~$\hat{\rho}_B$ is a representation.

On the other hand the map $\hat{\phi}_{\chi}$ is given by the matrices
$$
\hat{\phi}_{\chi}(a)= \begin{pmatrix}
                            \tilde{\gamma}_0^0(a) & \dots &\tilde{\gamma}_{n-1}^0(a)\\
                            \vdots & \ddots & \vdots \\
                            \tilde{\gamma}^{n-1}_0(a)& \dots &\tilde{\gamma}^{n-1}_{n-1}(a)
                   \end{pmatrix} \quad\text{for $a\in A$.}
$$
A direct computation shows that $\hat{\phi}_{\chi}$ is a representation if and only if the maps $\tilde{\gamma}^i_j$ satisfy
\begin{enumerate}[resume]

\smallskip

\item $\tilde{\gamma}^i_j(ab)= \sum_{p=1}^{n}\tilde{\gamma}^i_p(a)\tilde{\gamma}^p_j(b)$ for $i,j=0,\dots,n-1$,

\smallskip

\item $\tilde{\gamma}^i_j(1_A)=\delta_{ij}1_A$ for $i,j=0,\dots,n-1$.

\smallskip

\end{enumerate}
Conditions~(1)--(5) correspond to the conditions~(2)(a)--(d) in~\cite{GGV}*{Proposition 1.2} and to the assumption $\gamma^r_j=0$ for $r\ge n$ made in that paper.

\begin{bibdiv}
\begin{biblist}

\bib{CSV}{article}{
   author={Cap, Andreas},
   author={Schichl, Hermann},
   author={Van{\v{z}}ura, Ji{\v{r}}{\'{\i}}},
   title={On twisted tensor products of algebras},
   journal={Comm. Algebra},
   volume={23},
   date={1995},
   number={12},
   pages={4701--4735},
   issn={0092-7872},
   review={\MR{1352565 (96k:16039)}},
   doi={10.1080/00927879508825496},
}

\bib{Ca}{article}{
   author={Cartier, P},
   title={Produits tensoriels tordus},
   journal={Expos\'{e} au S\'{e}minaire des groupes quantiques de l' \'{E}cole Normale
Sup\'{e}rieure, Paris},
   date={1991-1992},
}

\bib{Ci}{article}{
   author={Cibils, Claude},
   title={Non-commutative duplicates of finite sets},
   journal={J. Algebra Appl.},
   volume={5},
   date={2006},
   number={3},
   pages={361--377},
   issn={0219-4988},
   review={\MR{2235816 (2007d:16020)}},
   doi={10.1142/S0219498806001776},
}

\bib{GG}{article}{
   author={Guccione, Jorge A.},
   author={Guccione, Juan J.},
   title={Hochschild homology of twisted tensor products},
   journal={$K$-Theory},
   volume={18},
   date={1999},
   number={4},
   pages={363--400},
   issn={0920-3036},
   review={\MR{1738899 (2001a:16016)}},
   doi={10.1023/A:1007890230081},
}

\bib{GGV}{article}{
   author={Guccione, Jorge A.},
   author={Guccione, Juan J.},
   author={Valqui, Christian},
   title={Non commutative truncated polynomial extensions},
   journal={J. Pure Appl. Algebra},
   volume={216},
   date={2012},
   number={11},
   pages={2315--2337},
   issn={0022-4049},
   review={\MR{2927170}},
   doi={10.1016/j.jpaa.2012.01.021},
}

\bib{GGVTP}{article}{
   author={Guccione, Jorge A.},
   author={Guccione, Juan J.},
   author={Valqui, Christian},
   title={Twisted planes},
   journal={Comm. Algebra},
   volume={38},
   date={2010},
   number={5},
   pages={1930--1956},
   issn={0092-7872},
   review={\MR{2642035 (2011h:16031)}},
   doi={10.1080/00927870903023105},
}

\bib{JLNS}{article}{
   author={Jara, P.},
   author={L{\'o}pez Pe{\~n}a, J.},
   author={Navarro, G.},
   author={{\c{S}}tefan, D.},
   title={On the classification of twisting maps between $K^n$ and $K^m$},
   journal={Algebr. Represent. Theory},
   volume={14},
   date={2011},
   number={5},
   pages={869--895},
   issn={1386-923X},
   review={\MR{2832263 (2012g:16053)}},
   doi={10.1007/s10468-010-9222-x},
}

\bib{LNC}{article}{
   author={Cortadellas, {\'O}scar},
   author={L{\'o}pez Pe{\~n}a, Javier},
   author={Navarro, Gabriel},
   title={Factorization structures with a two-dimensional factor},
   journal={J. Lond. Math. Soc. (2)},
   volume={81},
   date={2010},
   number={1},
   pages={1--23},
   issn={0024-6107},
   review={\MR{2580451 (2010m:16035)}},
   doi={10.1112/jlms/jdp055},
}

\bib{Ma}{article}{
   author={Majid, Shahn},
   title={Physics for algebraists: noncommutative and noncocommutative Hopf
   algebras by a bicrossproduct construction},
   journal={J. Algebra},
   volume={130},
   date={1990},
   number={1},
   pages={17--64},
   issn={0021-8693},
   review={\MR{1045735 (91j:16050)}},
   doi={10.1016/0021-8693(90)90099-A},
}

\bib{Tam}{article}{
   author={Tambara, D.},
   title={The coendomorphism bialgebra of an algebra},
   journal={J. Fac. Sci. Univ. Tokyo Sect. IA Math.},
   volume={37},
   date={1990},
   number={2},
   pages={425--456},
   issn={0040-8980},
   review={\MR{1071429 (91f:16048)}},
}

\bib{VDVK}{article}{
   author={Van Daele, A.},
   author={Van Keer, S.},
   title={The Yang-Baxter and pentagon equation},
   journal={Compositio Math.},
   volume={91},
   date={1994},
   number={2},
   pages={201--221},
   issn={0010-437X},
   review={\MR{1273649 (95c:16052)}},
}

\end{biblist}
\end{bibdiv}

\end{document}